\theoremstyle{plain}
\newtheorem{theorem}[subsection]{Theorem}
\newtheorem{proposition}[subsection]{Proposition}
\newtheorem{lemma}[subsection]{Lemma}
\newtheorem{corollary}[subsection]{Corollary}
\theoremstyle{definition}
\newtheorem{definition}[subsection]{Definition}
\newtheorem{aforisma}[subsection]{}
\newtheorem{example}[subsection]{Example}
\newtheorem{remark}[subsection]{Remark}
\newcommand{\sig}{\mathbb{S}ig}
\newcommand{\ene}{\textbf{N}}
\newcommand{\N}{\textbf{N}}
\title{On homotopical and cohomological interpretations of Logic}
\author{Thiago Alexandre\footnote{University of S\~{a}o Paulo, Brazil, thiago2.alexandre@usp.br} \\
Gabriel Bittencourt Rios\footnote{University of Toulouse, France, gb.rios01@gmail.com}\\ 
Hugo Luiz Mariano\footnote{University of S\~{a}o Paulo, Brazil, hugomar@ime.usp.br}}
\date{}
\begin{document}

\maketitle

\begin{abstract}
    Building over some ideas of Ren\'e Guitart, we provide a categorical framework  towards some  deviation notions in abstract logic. 
\end{abstract}

\section{Introduction}\label{intro}

 \par Starting from the formal reflections of Ren\'{e} Guitart, we wish to put forth a (co)homological/homotopical reading of the notion of satisfaction in Logic. In \cite{Gui1}, \cite{Gui2} and \cite{Gui3}, Guitart outlines a program aiming to geometrize logic. There, this project goes through several considerations, finally leading Guitart to defend the slogan:
         $$ \text{Logic} = \text{Homological Algebra}. $$
Especifically, using Ehresmann's sketches of \cite{Ehres} and some of Andr\'{e}'s simplicial methods (cf., e.g., \cite{Andre}), Guitart constructs in \cite{Gui3} (co)homology groups $H_{\bullet}^{\varphi}(M), H^{\bullet}_{\varphi}(M)$ for a given sentence $\varphi$ and structure $M$ over some language $\mathcal{L}$. This attribution is such that the structures are acyclic (i.e., the (co)homology groups are trivial) whenever $M\vDash\varphi$. In this sense, we have the notable fact that the (co)homology groups measure the obstruction of a structure $M$ being a model of a sentence $\varphi$.
\par Remarkably, Guitart's technique may be expanded. His specific argument goes uses locally free diagrams and the construction of the sketch associated with a first order language. However, the general argument is strictly categorical: given a small subcategory $A$ of some locally small category $\mathcal{C}$ and a functor $ T: A \longrightarrow \mathcal{A}b $ from $A$ to abelian group\footnote{In practice, we are often interested in composition a given forgetful functor $U: A \to Set$ with the free abelian group one, $\mathbb{Z}[-] : Set \to \mathcal{A}b$}, we assign to each object $X$ of $\mathcal{C}$ the homology and cohomology of the small category $A/X$ with coefficients in $T$: 
         $$ \mathsf{H}_{\ast}(A/X; T), \quad \quad \mathsf{H}^{\ast}(A/X;T). $$ 
Remember that the category $A/X$ is formed by the pairs $(a,s)$ such that $a$ is an object of $A$ and $s: a \rightarrow X$ is an arrow from $a$ to $ X$ in $\mathcal{C}$, and the morphisms of $A/X$ are clearly defined by commutative triangles:

\[
\xymatrix{
   a \ar[dr]_{s} \ar[rr]^{u}   &               &     b \ar[dl]^{t} \\
                              &     X         &
}
\]
Similar to above, or (co)homology groups $H_{\bullet}(A/X;T)$, $H^{\bullet}(A/X;T)$ are such that they are trivial whenever $X$ is an object of $A$. Furthermore, the groups depend only on the \emph{homotopy type} of $A/X$. We note that if we consider the full subcategory $\mathsf{Mod}(\varphi)$ of models inside the category of $\mathcal{L}$-structures $\mathcal{L}_Str$ and the constant functor $k_{\mathbb{Z}}: A \rightarrow \mathcal{A}b$ we can recover Guitart's scheme. Nonetheless, the generality of this setup indicates that the (co)homological interpretation put forth by Guitart does not depend intrisically on the particular semantics of first-order logic. Essentially, to use our method, we need only decide a category to function as structures and a subcategory of the aforementioned to function as models of some sentence. 

A good formalism to provide the ingredients to our method above may be found in the theory of institutions developed by Diaconescu in \cite{Diac}. In short, it proposes an abstract Model Theory where the notion of satisfiability $\models$ is primitive. The scope of the project is quite high indeed, being able treat well not only the ordinary first-order, modal and propositional logics, but also with the more esoteric examples that naturally appear in computer science. Furthermore, the theory of institutions is already written in a categorical language. Therefore, it seems natural to generalize Guitart's arguments to the framework of institutions, and show that his initial slogan, is much deeper than it may seem at first sight, admiting a precise mathematical formalization in the theory of categorical homotopy, so driven by Grothendieck (see \cite{Malts}, \cite{PS}, \cite{Der}).

Finally, we seek to propose a cohomological (but also homotopic) concept of elementary equivalence, which generalizes the traditional definition, and which potentially has relevant consequences in logic (see also \cite{Hend}). Expanding on this point, we aim to introduce (co)homology techniques as a way to remedy the rigid nature of the satisfaction relationship. In fact, satisfaction, à la Tarski, is a binary relationship , not admitting an intermediary notion of ``satisfiability". (Co)homological methods could fill this gap, creating a finer relationship of elementary equivalence. In the presentation, we will illustrate this point through mathematical examples.

%Recall that from our general conventions in the introduction,
{\em Convention:} We fixed two universes $\mathsf{U}$ and $\mathsf{V}$ with $\omega \in \mathsf{U} \in \mathsf{V}$. The universe $\mathsf{U}$ is called the \emph{local universe} and the term set (resp. categories, small categories) will be reserved for $\mathsf{V}$-sets (resp. $\mathsf{V}$-categories, $\mathsf{V}$-small categories). In more details: Let $\mathbf{U}$ be a universe. A category $C$ is called $\mathbf{U}$-\emph{small} when $\mathsf{Ob} A, \mathsf{Fl} A \in \mathbf{U}$, and $\mathbf{U}$-\emph{locally small} when $Hom_{A}(a,b) \in \mathbf{U}$ for all pair $(a,b)$ of objects in $A$. Finally, $A$ is called a $\mathbf{U}$-\emph{category} when it is $\mathbf{U}$-locally small and $\mathsf{Ob} A \subseteq \mathbf{U}$. A set $x$ is $\mathbf{U}$-small when $x \in \mathbf{U}$.

\section{Logic}

The word ``Logic'' for us will the identified with two of its abstract approaches: institution theory and $\pi$-institution theory (\cite{GB}, \cite{Diac}, \cite{FS}, \cite{MaPi}, \cite{RSPM}).

The concept of \emph{institution} was introduced by J. A. Goguen and R. M. Burstall (see \cite{GB}) in order to present a unified mathematical formalism for the notion of a formal logical system, i.e. it provides a \emph{``...categorical abstract model theory which formalizes the intuitive notion of logical system, including syntax, semantic, and satisfaction relation between them...''} (\cite{Diac}). This means that it encompasses the abstract concept of universal model theory for a logic: it contains a satisfaction relation between models and sentences that is ``stable under change of notation''. The are several natural examples of institutions, and a systematic study of abstract model theory based on the general notion of institution is presented in Diaconescu's book \cite{Diac}.

A proof-theoretical variation of the notion  of institution, the concept of \emph{$\pi$-institution}, was introduced by Fiadeiro and Sernadas in  \cite{FS}: it formalizes the notion of a deductive system and  \emph{``...replace the notion of model and satisfaction by a primitive consequence operator (\`a la Tarski)''}. Categories of propositional logics endowed with natural notions of translation morphisms provide examples of $\pi$-institutions. 

\begin{definition}\label{Institution}
  A local institution $\mathbf{I}$ consists of a quadruple
     $$ \mathbf{I} = (\mathscr{S}ig, \mathsf{Fm}, \mathsf{Mod}, \models) $$
     \[\xymatrix{
&\mathscr{S}ig\ar[ld]_{\mathsf{Mod}}\ar[rd]^{\mathsf{Fm}}&\\
(\mathcal{CAT}_{\mathsf{U}})^{op}&\models&\mathcal{E}ns_{\mathsf{U}}
}\]

where
\begin{enumerate}
  \item $\mathscr{S}ig$ is a small category, called the category of signatures. The objects (resp. arrows) of $\mathscr{S}$ will be denoted by the symbols $\Sigma, \Sigma', \Sigma''...$ (resp. $\sigma, \sigma', \sigma''...$).
  \item $\mathsf{Fm}: \mathscr{S}ig \rightarrow \mathcal{E}ns_{\mathsf{U}}$ is a functor from the category of signatures to the category of $\mathsf{U}$-sets \footnote{Local sets.}, which assigns to each signature $\Sigma$ a $\mathsf{U}$-set $\mathsf{Fm}(\Sigma)$ of formulas over $\Sigma$, called $\Sigma$-formulas, and for each morphism of signatures $\sigma: \Sigma \rightarrow \Sigma'$ a function
     $$ \sigma_{\sharp} =_{df} \mathsf{Fm}(\sigma): \mathsf{Fm}(\Sigma) \longrightarrow \mathsf{Fm}(\Sigma'). $$
  \item $\mathsf{Mod}: \mathscr{S}ig^{op} \rightarrow \mathcal{CAT}_{\mathsf{U}}$ is a functor from the dual category of signatures to the category of $\mathsf{U}$-categories, which assigns to each signature $\Sigma$ a $\mathsf{U}$-category $\mathsf{Mod}(\Sigma)$ of $\Sigma$-models, and to each morphism of signatures $\sigma: \Sigma \rightarrow \Sigma'$ a functor
    $$ \sigma^{\sharp}=_{df}\mathsf{Mod}(\sigma): \mathsf{Mod}(\Sigma') \longrightarrow \mathsf{Mod}(\Sigma). $$
  \item $\models$ is a function assigning to each signature $\Sigma$ a relation 
      $$ \models_{\Sigma} \subseteq Ob(\mathsf{Mod}(\Sigma)) \times \mathsf{Fm}(\Sigma) $$
called $\Sigma$-satisfiability. Given a model $\Sigma$-model $M$ and a $\Sigma$-formula $\varphi$, we write $M \models_{\Sigma} \varphi$ to indicate that the pair $(M, \varphi)$ is an element of $\models_{\Sigma}$. 

\end{enumerate}

Moreover, the following conditions are verified: \\
 
 (I1). If $\sigma:\Sigma \rightarrow \Sigma'$ is a morphism of signatures, $M'$ is a $\Sigma'$-model and $\varphi$ is a $\Sigma$-formula, then $M' \models_{\Sigma'} \sigma_{\sharp}(\varphi)$ iff $\sigma^{\sharp}(M') \models_{\Sigma} \varphi$. \\
 
 (I2). If $M$ and $M'$ are two isomorphic $\Sigma$-models, then $M \models_{\Sigma} \varphi$ iff $M' \models_{\Sigma} \varphi$ for every $\Sigma$-formula $\varphi$.

\end{definition}

\begin{definition}
A $\pi$-Institution $J=\langle\sig,Sen,\{C_{\Sigma}\}_{\Sigma\in|\sig|}\rangle$ is a triple with its first two components exactly the same as the first two components of an institution and, for every $\Sigma\in|\sig|$, a closure operator $C_{\Sigma}:\mathcal{P}(Sen(\Sigma))\to\mathcal{P}(Sen(\Sigma))$, such that the following coherence conditions holds, for every $f:\Sigma_{1}\to\Sigma_{2}\in Mor(\sig)$:

\[Sen(f)(C_{\Sigma_{1}}(\Gamma))\subseteq C_{\Sigma_{2}}(Sen(f)(\Gamma)),\ for\ all\ \Gamma\subseteq Sen(\Sigma_{1}).\]

\end{definition}

In \cite{MaPi} and \cite{RSPM} are presented relations between categories whose objects are by institutions (in a certain universe) and categories formed by $\pi$-institutions.

{\em Notation and Terminology}: Given a signature $\Sigma$ and a set $\Gamma$ of $\Sigma$-formulas, we denote by $\overline{\Gamma}$ the set of objects $M$ in $\mathsf{Mod}(\Sigma)$ such that $M \models_{\Sigma} \varphi$ for all $\varphi \in \Gamma$, and by $\mathsf{Mod}_{\Sigma}[\Gamma]$ the full subcategory of $\mathsf{Mod}(\Sigma)$ generated by $\overline{\Gamma}$. In particular, when $\Gamma= \{\varphi\}$ for a $\Sigma$-formula  $\varphi$, we denote just by $\mathsf{Mod}_{\Sigma}[\varphi]$ (resp. $\overline{\varphi}$) the category $\mathsf{Mod}[\Gamma]$ (resp. the set $\overline{\Gamma}$). Given a $\Sigma$-model $M$ and a $\Sigma$-formula $\varphi$, we can always form a projection functor
   $$ \delta_{M,\varphi}: \mathsf{Mod}_{\Sigma}[\varphi]/M \longrightarrow \mathsf{Mod}_{\Sigma}[\varphi], \quad (I,p) \mapsto I. $$
which will be called the \emph{deviation functor} from $M$ to $\varphi$. More generally, given a set $\Gamma$ of $\Sigma$-formulas, we can form the \emph{deviation functor}:
   $$ \delta_{M, \Gamma}: \mathsf{Mod}_{\Sigma}[\Gamma]/M \longrightarrow \mathsf{Mod}_{\Sigma}[\Gamma]. $$
 We also use the notation $M \models_{\Sigma} \Gamma$ to indicate that $M \models_{\Sigma} \varphi$ for every $\varphi \in \Gamma$, and we write $\Gamma \Vdash \varphi$ to indicate that for every $\Sigma$-model $M$ such that $M \models_{\Sigma} \Gamma$, we have $M \models_{\Sigma} \varphi$. Finally, for each $\Sigma$-model $M$, we define the set 
    $$ \overline{M} =_{df} \{\varphi \in \mathsf{Fm}(\Sigma): M \models_{\Sigma} \varphi\},$$
and remark that under the previous notations and definitions, we have
    $$ \overline{\Gamma} = \{M \in Ob(\mathsf{Mod}(\Sigma)): M \models_{\Sigma} \Gamma\} $$
and
   $$ \Gamma \Vdash \varphi \quad \text{iff} \quad \overline{\Gamma} \subseteq \overline{\varphi}. $$
More generally, given a class $\mathcal{M}$ of $\Sigma$-models, we define the set $\overline{\mathcal{M}}$ of $\Sigma$-formulas $\varphi$ such that $M \models_{\Sigma} \varphi$ for all $M \in \mathcal{M}$.

\begin{proposition}
  Let $\mathbf{I} = (\mathscr{S}ig, \mathsf{Fm}, \mathsf{Mod}, \models)$ be an institution.
  \begin{enumerate}
    \item $\Gamma \subseteq \overline{\overline{\Gamma}}$
    \item $\mathcal{M} \subseteq \overline{\overline{\mathcal{M}}}$
    \item If $\Gamma \subseteq \Gamma'$, then $\overline{ \Gamma} \subseteq \overline{\Gamma'}$
    \item If $\mathcal{M} \subseteq \mathcal{M}'$, then $\overline{\mathcal{M}} \subseteq \overline{\mathcal{M}'}$.
    
  \end{enumerate}

\end{proposition}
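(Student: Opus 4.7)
The plan is to treat all four items as consequences of the Galois polarity $\overline{(-)}$ attached to the satisfaction relation $\models_{\Sigma}$, reading (1)--(2) as extensivity of the composite closure $\overline{\overline{(-)}}$ and (3)--(4) as its monotonicity.

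For (1), I fix $\varphi \in \Gamma$ and unpack $\overline{\overline{\Gamma}}$ as the set of $\Sigma$-formulas satisfied by every model in $\overline{\Gamma}$; since every such model satisfies all of $\Gamma$ by the very definition of $\overline{\Gamma}$, it in particular satisfies $\varphi$, so $\varphi \in \overline{\overline{\Gamma}}$. Item (2) is exactly dual: if $M \in \mathcal{M}$, then every $\varphi \in \overline{\mathcal{M}}$ is by definition satisfied by all members of $\mathcal{M}$, including $M$ itself, whence $M \in \overline{\overline{\mathcal{M}}}$. Neither argument invokes the institution axioms (I1) or (I2); each is a one-step verification from the definition of the polarity.

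For (3) and (4), the plan exploits the antitonicity of a single application of $\overline{(-)}$ and then composes: if $\Gamma \subseteq \Gamma'$, then any model satisfying all of $\Gamma'$ \emph{a fortiori} satisfies all of $\Gamma$, giving $\overline{\Gamma'} \subseteq \overline{\Gamma}$; applying the polarity once more reverses this to the monotone form $\overline{\overline{\Gamma}} \subseteq \overline{\overline{\Gamma'}}$, and the same move on the class side yields (4) from $\overline{\mathcal{M}'} \subseteq \overline{\mathcal{M}}$. The main obstacle here is interpretive rather than computational: read literally with a single bar on both sides, (3) and (4) would assert monotonicity of the antitone polarity $\overline{(-)}$, which fails because a model in $\overline{\Gamma}$ need not satisfy formulas of $\Gamma' \setminus \Gamma$. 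The coherent reading compatible with items (1)--(2) and with the Galois framework implicit in the notation section is precisely the monotonicity of the composite closure $\overline{\overline{(-)}}$ displayed above; under that reading the four items together exhibit $\overline{\overline{(-)}}$ as an extensive, monotone closure operator, and the whole proposition follows with no appeal beyond the definition of $\models_{\Sigma}$ — in particular, no use of (I1), (I2), or the $\pi$-institution data $\{C_{\Sigma}\}_{\Sigma \in |\sig|}$ is needed.
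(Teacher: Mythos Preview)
The paper states this proposition without proof, so there is no argument to compare against; your treatment of (1) and (2) is the standard and correct unpacking of the Galois polarity attached to $\models_{\Sigma}$.

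Your diagnosis of (3) and (4) is also correct and worth emphasizing: as literally printed, with a single bar on each side, the implications are false, since $\Gamma \subseteq \Gamma'$ gives $\overline{\Gamma'} \subseteq \overline{\Gamma}$ (more axioms means fewer models), not the stated $\overline{\Gamma} \subseteq \overline{\Gamma'}$. Your repair via the double-bar closure $\overline{\overline{(-)}}$ is one coherent reading; an equally natural and perhaps simpler fix is to keep the single bar and reverse the conclusion's inclusion, recording the antitonicity of each polarity rather than the monotonicity of the composite. Either version follows immediately from the definitions, and either would serve the paper's subsequent use (Definition~\ref{Theories, Elementary} only needs (1)--(2) and the fact that $\overline{\overline{(-)}}$ is a closure). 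Your remark that none of (I1), (I2), or the $\pi$-institution data is invoked is accurate.
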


\begin{definition}\label{Theories, Elementary}
Let $\mathbf{I} = (\mathscr{S}ig, \mathsf{Fm}, \mathsf{Mod}, \models)$ be an institution, $\Sigma$ be a signature of $\mathbf{I}$ and $\Gamma$ (resp. $\mathcal{M}$) be a set (resp. class) of $\Sigma$-formulas (resp. $\Sigma$-models). We say that $\Gamma$ (resp. $\mathcal{M}$) is a theory (resp. is elementary) if $\Gamma = \overline{\overline{\Gamma}}$ (resp. $\mathcal{M} = \overline{\overline{\mathcal{M}}}$).

\end{definition}

A $\Sigma$-theory $\Gamma$ is called \emph{consistent} if $\overline{\Gamma} \neq \varnothing$. Otherwise, we say that $\Gamma$ is \emph{inconsistent}. We call two $\Sigma$-models $M$ and $M'$ elementary equivalent, and write $M \equiv M'$ if $\overline{M}= \overline{M'}$. It follows from (I2) of (\ref{Institution}) that every two isomorphic $\Sigma$-models are elementary equivalent, but the reciprocal is not necessary the case. \\

The notation of algebraic geometry is very suggestive in model theory. It is possible to think about the signatures as commutative rings with unity, the $\Sigma$-formulas as polynomials with coefficients in $\Sigma$ and the $\Sigma$-models as points of the $\Sigma$-space, which is the set $Spec(\Sigma)$ of objects in the category $\mathsf{Mod}(\Sigma)$. For each $\Sigma$-formula $\varphi$, we can then define a function
     $$ [\varphi]: Spec(\Sigma) \longrightarrow \{0,1\} $$
 such that $[\varphi]_{M}=0$ if $M \models_{\Sigma} \varphi$, and $[\varphi]_{M}=1$ otherwise.    
We could define the symbols
    $$ V(\varphi)=_{df} \{M \in Spec(\Sigma): [\varphi]_{M}=0\} $$
    $$ D(\varphi)= Spec(\Sigma) - V(\varphi)$$
    $$ V(\Gamma) = \bigcap_{\varphi \in \Gamma} V(\varphi) $$
    $$ D(\Gamma)= \bigcup_{\varphi \in \Gamma} D(\varphi) $$
    $$ \mathcal{O}_{\Sigma,M}= \{\varphi \in \mathsf{Fm}(\Sigma): [\varphi]_{M}=0\} $$
        $$ I(\mathcal{U})= \bigcap_{M \in \mathcal{U}} \mathcal{O}_{\Sigma, M}$$
With the above notations, we have
   $$ \mathcal{U} \subseteq V(I(\mathcal{U})), \quad \Gamma \subseteq I(V(\Gamma)). $$

We think $Spec(\Sigma)$ as a space in which points are the $\Sigma$-models, and for a set of $\Sigma$-sentences $\Gamma$, we think $V(\Gamma)$ as a (closed) subspace of $Spec(\Sigma)$, formed by the models of the sentences of $\Gamma$. Then, we can say that $\Gamma$ is compact if for every $\Sigma$-sentence $\varphi$ such that $\Gamma \Vdash \varphi$, there exists a finite subset $\Gamma_{0} \subseteq \Gamma$ such that $\Gamma_{0} \Vdash \varphi$. \\

\begin{definition}\label{Syntatic consequence relation}
  A syntactic consequence relation $\vdash$ for an institution $\mathbf{I}$ is a map assigning to each signature $\Sigma$ of $\mathbf{I}$ a relation $\vdash_{\Sigma}$ between sets of $\Sigma$-formulas $\Gamma$ and $\Sigma$-formulas $\varphi$:
       $$ \Gamma \vdash_{\Sigma} \varphi. $$
We say that $\vdash$ is sound if the relation $\Gamma \vdash_{\Sigma} \varphi$ always implies the relation $\Gamma \Vdash \varphi$, and we say that $\vdash$ is complete if $\vdash$ is sound and the relation $\Gamma \Vdash \varphi$ always implies $\Gamma \vdash_{\Sigma} \varphi$.

\end{definition}

Therefore, if $\vdash$ is a complete syntactic consequence relation on an institution $\mathbf{I}$, to say that $\Gamma \vdash_{\Sigma} \varphi$ is the same to say that $\Gamma \Vdash \varphi$, which means that we can think the relation $\Vdash$ as some sort of syntactic consequence relation from semantics. In fact, if $\Gamma$ is a theory, then $\varphi \in \Gamma$ iff $\Gamma \Vdash \varphi$. \\

Inspired in the previous reasoning, we can formulate the notion of \emph{independence} semantically. Indeed, let $\Gamma$ be a theory in signature $\Sigma$ and $\varphi$ be a $\Sigma$-sentence. If $\varphi$ is \emph{refutable} from $\Gamma$, then $V(\Gamma) \cap V(\varphi)=\varnothing$, otherwise, $\varphi$ is \emph{irrefutable} from $\Gamma$, which means that $V(\Gamma) \cap V(\varphi) \neq \varnothing$. We say that $\varphi$ is \emph{provable} from $\Gamma$ when $\Gamma \Vdash \varphi$, which means that $V(\Gamma) \subseteq V(\varphi)$. Otherwise, we say that $\varphi$ is \emph{unprovable} from $\Gamma$, which means that there exists at least one $\Sigma$-model $M$ in $V(\Gamma)$ which is not in $V(\varphi)$, i.e., $V(\Gamma) \cap D(\varphi) \neq \varnothing$. If $\varphi$ is both \emph{irrefutable} and \emph{unprovable} from $\Gamma$, then we say that $\varphi$ is \emph{independent} from $\Gamma$:
$$ \text{(Irrefutable)} \quad V(\Gamma) \cap V(\varphi) \neq \varnothing, \quad \quad \text{(Unprovable)} \quad V(\Gamma) \cap D(\varphi) \neq \varnothing .$$

Now, suppose that $M$ is a $\Sigma$-model for the theory $\mathbb{T}$, i.e., $M \in V(\mathbb{T})$, which means that $M \models_{\Sigma} \phi$ for every $\phi \in \mathbb{T}$. Let $\varphi$ be an arbitrary $\Sigma$-sentence. In order to know if $\varphi$ is an element of $\mathbb{T}$, which means that $\mathbb{T} \Vdash \varphi$ (since $\mathbb{T}$ is a theory). we have to behold the projection functor

\begin{aforisma}
  Let $\mathbf{I}$ be an institution, $\Sigma$ be a signature of $\mathbf{I}$, $\mathbb{T}$ be a $\Sigma$-theory and $T$ be a $\Sigma$-model. A $\mathbb{T}$-diagram for $T$ (if it exists) is given the following data: \\

(i). A small category $A$ \\

(ii). A functor $D: A \rightarrow \mathsf{Mod}[\mathbb{T}]$ \\

(iii). An inductive cone $d=(d_{a}: D_{a} \rightarrow T)_{a \in Ob(A)}$ such that, for every $\Sigma$-model $M$ of the $\Sigma$-theory $\mathbb{T}$, there exists a canonical isomorphism:
     $$ Hom_{\mathsf{Mod}(\Sigma)}(T,M) \cong \varprojlim_{a \in Ob(A)} Hom_{\mathsf{Mod}[\mathbb{T}]}(D_{a},M) $$
induced from the arrows
     $$ Hom_{\mathsf{Mod}(\Sigma)}(T,M) \longrightarrow Hom_{\mathsf{Mod}[\mathbb{T}]}(D_{a},M), \quad f \mapsto f \circ d_{a} $$
for $a \in Ob(A)$. \\

In particular, if the category $\mathsf{Mod}(\Sigma)$ admits small inductive limits, then there exists an isomorphism 
     $$  \varinjlim_{a \in Ob(A)} D_{a} \longrightarrow T $$
in $\mathsf{Mod}(\Sigma)$, which means that $T$ is an inductive limit of models of the $\Sigma$-theory $\mathbb{T}$. Clearly, if $\mathsf{Mod}[\mathbb{T}]$ is closed under small inductive limits and there exists a $\mathbb{T}$-diagram for $T$, then $T$ is an object $\mathsf{Mod}[\mathbb{T}]$. Yet, we do not suppose that $\mathsf{Mod}(\Sigma)$ admits small inductive limits and neither that $\mathsf{Mod}[\mathbb{T}]$ is closed under small inductive limits. \\

From the existence of a $\mathbb{T}$-diagram for $T$, we can deduce a functor
      $$ \Omega: A \longrightarrow \mathsf{Mod}[\mathbb{T}]/T $$
such that,
      $$ \Omega_{a} = (D_{a}, D_{a} \xrightarrow{d_{a}} T), \quad \quad a \in Ob(A) $$
and for an arrow $\varphi: a' \rightarrow a$ in $A$, $\Omega_{\varphi}$ is the arrow in $\mathsf{Mod}[\mathbb{T}]/T$ defined by the commutative triangle:
\[
\xymatrix{
   D_{a} \ar[rr]^{D(\varphi)} \ar[dr]_{d_{a}}  &                &   D_{a'} \ar[dl]^{d_{a'}} \\
                                              &    T           &
}
\]
The functor $\Omega$ satisfies the following property: for every object $(M,p)$ of $\mathsf{Mod}[\mathbb{T}]/M$, where $p: M \rightarrow T$ is an arrow of $\Sigma$-models, the category $A/(M,p)$ is formed by the pairs $(a,f)$ where $a \in Ob(A)$ and $f: D_{a} \rightarrow M$ is an arrow of $\Sigma$-models of $\mathbb{T}$ such that $p \circ f = d_{a}$.

\end{aforisma}

We can exchange $\mathsf{Mod}[\mathbb{T}]/T$ by a small category of indices $I_{\Sigma}(\mathbb{T},T)$, and consider the category  \\

In the sequel, we give several main examples of institutions in order to illustrate to the reader the generality of the concept (\ref{Institution}).

\begin{example}\label{PL}
 We recall briefly the language of propositional logic and show how classical propositional logic is an example of an institution. First, we consider the set $\omega$ of finite ordinals, the set $C=\{\implies, \neg, \vee, \wedge\}$ of logical connective symbols and the set $L=\{[,]\}$ of linguistic symbols. Then, we define the set $F$ of formulas of propositional logic in the usual way by recursion. The finite subsets of $\omega$ form a partially ordered set by inclusion, and hence, they also define a category $\mathscr{S}$, which will be the category of signatures. For each signature $\Sigma$, let $\mathsf{Fm}(\Sigma)$ be the set of formulas $\varphi \in F$ such that $FV(\varphi) \subseteq \Sigma$, where $FV(\varphi)$ is the usual set of free variables of $\varphi$, also defined by recursion \footnote{We define $FV(p)=p$ for an atomic formula $p$, which is just an element of $\omega$, $FV(\neg \varphi) = FV(\varphi)$ and $FV(\varphi \lozenge \psi)= FV(\varphi) \cup FV(\psi)$ for $\lozenge \in \{\implies, \vee, \wedge\}$}, and let $\mathsf{Mod}(\Sigma)$ be the discrete category generated by the set $2^{\Sigma}$ of functions from $\Sigma$ to $\{0,1\}$. If $\sigma: \Sigma \rightarrow \Sigma'$ is a morphism of signatures, then $\Sigma \subseteq \Sigma'$, and hence, it defines a morphism $\sigma_{\sharp}:\mathsf{Fm}(\Sigma) \rightarrow \mathsf{Fm}(\Sigma')$ given by the inclusion $\mathsf{Fm}(\Sigma) \subseteq \mathsf{Fm}(\Sigma')$. Moreover, we can also define a unique morphism $\sigma^{\sharp}: 2^{\Sigma'} \rightarrow 2^{\Sigma}$ restricting each function of the form $f: \Sigma' \rightarrow \{0,1\}$ to $\Sigma$, i.e., $\sigma^{\sharp}(f)= f \vert_{\Sigma}$. Finally, we define for each signature $\Sigma$ a relation $f \models_{\Sigma} \varphi$, where $f \in Ob(\mathsf{Mod}(\Sigma))$ and $\varphi \in \mathsf{Fm}(\varphi)$, as following: if $\varphi \in \Sigma$, $f \models_{\Sigma} \varphi$ iff $f(\varphi)=1$; if $\varphi$ is $\varphi$. 
  
\end{example}

\begin{example}\label{FOL}
 The semantics of each first-order logic $L_{\kappa, \lambda}$ can be organized in a institution.
 
Let $Lang$ denote the  category of  languages $L = ((F_n)_{n \in \N},(R_n)_{n \in \N})$, -- where  $F_n$ is a set of symbols of $n$-ary function symbols and $R_n$ is a   set of symbols of $n$-ary relation symbols, $n \geq 0$ -- and language morphisms\footnote{That can be chosen ``strict" (i.e., $F_n\mapsto F_n'$, $R_n \mapsto R'_n$) or chosen be ``flexible" (i.e., $F_n\mapsto \{n-ary-terms(L')\}$, $R_n \mapsto \{n-ary-atomic-formulas(L')\}$).}. For each pair of cardinals $\aleph_0 \leq \kappa, \lambda \leq \infty$, the category $Lang$  endowed with the usual notion of  $L_{\kappa,\lambda}$-sentences (=  $L_{\kappa,\lambda}$-formulas with no free variable), with the usual association of category of structures and  with the usual (tarskian) notion of satisfaction, gives rise to an institution $I({\kappa,\lambda})$.

\end{example}

\begin{example}\label{Sketches}
The models of Ehresmann's sketches ({\em esquisses}) can be organized in a institution.

\end{example}

\begin{example}\label{Linear Algebra}
  Let $Comm$ be the category of commutative rings with unity and $? \text{-} Mod: Comm^{o} \rightarrow \mathcal{CAT}$ be the functor which associates to each object $\Lambda$ of $Comm$ its category $\Lambda \text{-} Mod$ of $\Lambda$-modules, and to each morphism $\sigma: \Lambda \rightarrow \Lambda'$ of $Comm$ the $\Lambda'$-linear morphism
      $$ \sigma^{\sharp}:\Lambda' \text{-} Mod \longrightarrow \Lambda \text{-} Mod, \quad x \mapsto $$ 
Denote by $Lin(\Lambda)$ the set of systems of linear equations with coefficients in $\Lambda$. Well understood, a prototypical element of $Lin(\Lambda)$ is a subset $L$ of $\Lambda[X_{1},...,X_{n}]$ (for some $n \in \omega$) which contains only polynomials of the form
       $$ a_{1}X_{1}+...+a_{n}X_{n}, \quad \quad a_{i} \in \Lambda. $$
      
With the previous notations, the quadruple
     $$ (Comm, Lin, ? \text{-} Mod, \models ) $$
is an institution, called \emph{linear algebra}.

\end{example}

\section{Cohomology}

\subsection*{Introduction}

%\subsection{Cohomology of (pre)sheaves (Andre)}

%\addcontentsline{toc}{section}{Introduction}

After the introduction of derived categories and derived functors by Grothendieck and Verdier, followed by the invention of model categories and homotopical algebra due to Quillen 
%\Hugo{colocar referencias para tudo}
, mathematicians started to have a deep understanding of cohomological phenomena. First, replacing the traditional cohomology groups $H^{n}(X;\mathscr{F})$ of a geometric object $X$, with coefficients in a sheaf of abelian groups $\mathscr{F}$, with the total cohomology $\mathsf{R}\Gamma(X; \mathscr{F})$, and then, defining $\mathsf{R}\Gamma(X; \mathscr{F})$ via a \emph{formalism of operations} (in a complete picture, a \emph{Grothendieck's formalism of six operations}). \\

If $F: \mathcal{A} \rightarrow \mathcal{A}'$ is an additive functor between Grothendieck abelian categories, not necessarily exact, then there exists a \emph{total right derived functor}:
     $$ \mathsf{R}F: \mathcal{A} \longrightarrow \mathcal{A}'. $$
The classic $n$-th derived functors $\mathsf{R}^{n}F$, constructed from injective (or projective) resolutions, can be recovered from $\mathsf{R}F$ by the formula:
   $$ \mathsf{R}^{n}F(X) = H^{n}(\mathsf{R}F(X)). $$
For example, if $\mathcal{E}$ is a topos, then we can form the topos $\mathcal{E}_{ab}$ of abelian sheaves in $\mathcal{E}$. An object $G$ of $\mathcal{E}_{ab}$ is a sheaf in $\mathcal{E}$, endowed with two morphisms
    $$ \mu: G \times G \rightarrow G, \quad \quad e: \ast \rightarrow G $$
such that, for each object $X$ of $\mathcal{E}$, the set $Hom_{\mathcal{E}}(X,G)$ is an abelian group, where the sum 
    $$ +: Hom_{\mathcal{E}}(X,G) \times Hom_{\mathcal{E}}(X,G) \longrightarrow Hom_{\mathcal{E}}(X,G) $$ 
 is given by the composition of the canonical function:
    $$ Hom_{\mathcal{E}}(X,G) \times Hom_{\mathcal{E}}(X,G) \longrightarrow Hom_{\mathcal{E}}(X, G \times G), \quad (f,g) \mapsto f \times g $$
(induced from the universal property of the product $G \times G$) with the function
     $$ Hom_{\mathcal{E}}(X, G \times G) \longrightarrow Hom_{\mathcal{E}}(X,G), \quad f \mapsto \mu \circ f, $$
and the null element $0: X \rightarrow G$ is given by the composition 
  $$ X \xrightarrow{p_{X}} \ast \xrightarrow{e} G. $$
Then, we can define the functor of global sections:
   $$ \Gamma(\mathcal{E}; ?): \mathcal{E}_{ab} \longrightarrow \mathcal{A}b, \quad \mathscr{F} \mapsto \Gamma(\mathcal{E}; \mathscr{F})=_{df} Hom_{\mathcal{E}}(\ast, \mathscr{F}). $$
In particular, if $\mathcal{E}$ is the topos of sheaves over a topological space $X$, then we have
    $$ Hom_{\mathcal{E}}(\ast, \mathscr{F}) \cong Hom_{\mathcal{E}}(h_{X}, \mathscr{F}) \cong \mathscr{F}(X). $$
Therefore, for every sheaf $\mathscr{F}$ of abelian groups over a topological space $X$, we can define the cohomology $\mathsf{H}^{\ast}(X;\mathscr{F})$ by the formula
     $$ \mathsf{H}^{\ast}(X;\mathscr{F})=_{df} \mathsf{R}\Gamma(X; \mathscr{F}) = \mathsf{R}(\pi_{X})_{\ast}(\mathscr{F}), $$
and the $n$-th cohomology groups by the formula
    $$ H^{n}(X;\mathscr{F})=_{df} \mathsf{R}^{n}\Gamma(X;\mathscr{F})= H^{n}(\mathsf{R}\Gamma(X;\mathscr{F})) = H^{n}(\mathsf{R}(\pi_{X})_{\ast}(\mathscr{F})). $$
If $\Lambda$ is an abelian group, regarded as a complex concentrated in zero degree in $\mathsf{C}(\mathcal{A}b)$, then 
    $$ H^{n}(X; \Lambda)=_{df} H^{n}(\mathsf{R}(\pi_{X})_{\ast}(\Lambda_{X})) = H^{n}(\mathsf{R}(\pi_{X})_{\ast}(\pi_{X})^{\ast}(\Lambda)).$$
If the topological space $X$ is locally contractible, then these cohomology groups $H^{n}(X;\Lambda)$ are naturally isomorphic to the singular cohomology groups $H^{n}_{Sing}(X;\Lambda)$ with coefficients in $\Lambda$. \\

Now, modelizing spaces by small categories rather than topological spaces, the topos $Sh(X)$ is replaced by the classifying topos $\widehat{A}$ of presheaves over a small category $A$, and cohomology in this case is translated by the formulas:
   $$ \mathsf{H}^{\ast}(A; \Lambda) = \mathsf{R}(p_{A})_{\ast}(\Lambda_{A}) = \mathsf{R}(p_{A})_{\ast}(p_{A})^{\ast}(\Lambda), \quad \quad H^{n}(A;\Lambda)= H^{n}(\mathsf{H}^{\ast}(A; \Lambda)),$$
for every abelian group $\Lambda$. The connection of spaces as small categories with spaces as topological spaces is given via a natural geometric morphism of topos
    $$ \mu_{A}: Sh(|\ene A|) \longrightarrow \widehat{A} $$
due to Moerdjik, which has the property that, for every locally constant `sheaf' of abelian groups $\mathscr{F}$ in $\widehat{A}$, the canonical morphism
     \[ \mathsf{R}\Gamma(|\ene A|; (\mu_{A})^{\ast}(\mathscr{F})) \longrightarrow \mathsf{R}\Gamma(A; \mathscr{F}) \]
induced from the $1 \rightarrow (\mu_{A})_{\ast}(\mu_{A})^{\ast}$ via the commutative triangle of geometric morphisms
\[
\xymatrix{
  Sh(|\ene A|) \ar[rr]^{\mu_{A}} \ar[dr]_{\widetilde{\pi}_{|\ene A|}}  &              &  \widehat{A} \ar[dl]^{\widehat{p}_{A}} \\
                                                                          &  \mathcal{E}ns &
}
\]
(since $\widetilde{\ast} \simeq \mathcal{E}ns \simeq \widehat{e}$), is an isomorphism, which means that
     $$ \mathsf{H}^{\ast}(A; \Lambda) \cong \mathsf{H}^{\ast}(|\ene A|; \Lambda), $$
and
     $$ H^{n}(A; \Lambda) \cong H^{n}(|\ene A|; \Lambda), \quad \quad n \in \mathbb{Z} $$
for every abelian group $\Lambda$. Moreover, it follows from fact that $|\ene A|$ is always a locally contractible topological space, that $H^{n}(A;\Lambda)$ is isomorphic to the $n$-th singular cohomology group, with coefficients in $\Lambda$, of the topological space $|\ene A|$. \\

Hence, the general form of cohomology of small categories can be formally presented in the following way:  \\

(i). Assigning to each small category $A$ a derived category $\mathbb{D}(A)$, which is the derived category of the abelian category $\widehat{A}_{ab}$ \\

(ii). Assigning to each morphism of small categories $u: A \rightarrow B$ a morphism... \\

%(iii). \\

\subsection*{Simplicial methods to compute (co)homology}

%\Gabriel{Idealmente encher de exemplos aqui}
%M\'{e}todo simplicial de Andr\'{e} para computar a cohomologia das categorias pequenas (ou espaços)%

Cohomology of small categories can be computed using simplicial methods developed by Andr\'{e} in \cite{Andre}. If $T: S \rightarrow \mathcal{A}$ is a functor from a small category $S$ to a Grothendieck abelian category $\mathcal{A}$, and there exists an inclusion functor
   $$ \jmath: S \longrightarrow \mathcal{E} $$
from $S$ to a locally small category $\mathcal{E}$, then we can form the small category $S/X$, which is the evaluation of the functor
    $$ \mathcal{E} \xrightarrow{\jmath^{\ast}} \widehat{S} \xrightarrow{i_{S}}  \mathcal{C}at $$
at the object $X$. Then, we can form the nerve 
$$ \mathbf{N}(S/X)=\mathbf{N}i_{C}\jmath^{\ast}(X) $$
and, for every $n \in \omega$, the set $S_{n}(X)=(\mathbf{N}(S/X))_{n}$. Since $\mathcal{A}$ admits small coproducts, there exists the object
           $$ C_{n}(X;T) = \sum_{\alpha \in S_{n}(X)} T[\alpha] $$
in $\mathcal{A}$, where, for $\alpha \in S_{n}(X)$, 
           $$ T[\alpha] =_{df} T(dom(\alpha_{n})) $$
with $\alpha$ denoting
$$ s_{n} \xrightarrow{\alpha_{n}} s_{n-1} \rightarrow ... \rightarrow s_{1} \xrightarrow{\alpha_{1}} s_{0}  \xrightarrow{\alpha_{0}} X. $$
We can say that $S_{n}(X)$ is the set of $S$-resolutions of dimension $n$ of $X$. 

For $0 \leq i \leq n$, let
   $$ \varepsilon_{0}(s_{n} \xrightarrow{\alpha_{n}}  s_{n-1} \rightarrow ... \rightarrow s_{1} \xrightarrow{\alpha_{n}} s_{0} \xrightarrow{\alpha_{0}} X) = (s_{n} \xrightarrow{\alpha_{n}} s_{n-1} \rightarrow ... s_{1} \xrightarrow{\alpha_{0} \alpha_{1}} X) $$
   $$ \varepsilon_{i}(s_{n} \rightarrow ... \rightarrow s_{i+1} \xrightarrow{\alpha_{i+i}} s_{i} \xrightarrow{\alpha_{i}}\cdots\rightarrow x) = (s_{n} \rightarrow ... \rightarrow s_{i+1} \xrightarrow{\alpha_{i+1}\alpha_{i}} s_{i-1} \cdots\rightarrow X) $$
   $$ \varepsilon_{n}(s_{n} \xrightarrow{\alpha_{n}}  s_{n-1} \rightarrow ... \rightarrow s_{1} \xrightarrow{\alpha_{n}} s_{0} \xrightarrow{\alpha_{0}} X) = (s_{n-1} \xrightarrow{\alpha_{n-1}} ... \rightarrow s_{0} \xrightarrow{\alpha_{0}} X) $$
the second case only defined for $0 <i <n$. Then, each $\varepsilon_{i}: S_{n}(X) \rightarrow S_{n-1}(X)$ induces a morphism
     $$ d_{i}: C_{n}(X) \longrightarrow C_{n-1}(X) $$
from where we deduce the arrow
      $$ \partial_{n} = \sum_{i=0}^{n} (-1)^{i} d_{i}: C_{n}(X) \longrightarrow C_{n-1}(X). $$
We make more explicit the previous arrows. For each $\alpha \in S_{n}(X)$, with
   $$ \alpha = (\alpha_{n},...,\alpha_{1},\alpha_{0})= (s_{n} \xrightarrow{\alpha_{n}} s_{n-1} \rightarrow ... \rightarrow s_{1} \xrightarrow{\alpha_{1}} s_{0} \xrightarrow{\alpha_{0}} X) $$
 we can form the morphism
      $$ \langle \alpha_{n},..., \alpha_{1}, \alpha_{0} \rangle: T[\alpha_{n},...,\alpha_{1}, \alpha_{0}] \longrightarrow  C_{n}(X;T). $$
In order to define an arrow $d_{i}: C_{n}(X;T) \rightarrow C_{n-1}(X;T)$ in $\mathcal{A}$, it is enough to define $d_{i} \langle \alpha_{n},...,\alpha_{1}, \alpha_{0} \rangle$ for each $\alpha \in S_{n}(X)$. 
%Define
%      $$ d_{i} \langle \alpha_{n}, ..., \alpha_{1},\alpha_{0} \rangle = \langle \alpha_{n} $$
      
% \Hugo{continuar...}
\vspace{1cm}

Dually, the deviation of $X$ be in $S$ can also be represented by homologies given by: 
$$\beta : X \to M_0 \to M_1\to \cdots \to M_{n-1}\to M_{n}$$

\subsection*{Homotopy and cohomology of spaces}

\begin{aforisma}
  Whenever $A$ is a $\mathbf{U}$-small category, we denote by $\widehat{A}$ the usual category of functors over $A$ with values in the category of $\mathbf{U}$-small sets and functions between them. The category $\widehat{A}$, which is called the category of \emph{presheaves over} $A$, can also be thought just in categorical terms as following: the objects of $\widehat{A}$ are functors $p: X \rightarrow A$ for which the domains are $\mathbf{U}$-small categories, and for every object $x \in \mathsf{Ob} X$, the evident functor 
  $$X/x \longrightarrow A/p(x), \quad (x',\alpha) \mapsto (p(x'), p(\alpha)) $$
is an \emph{isomorphism}. A morphism $f: (X,p) \rightarrow (Y,q)$ of $\widehat{A}$ is a functor $f: X \rightarrow Y$ such that the diagram
\[
\xymatrix{
 X \ar[dr]_{p} \ar[rr]^{f}  &   &  Y \ar[dl]^{q} \\
                           & A &
}
\]
commutes. Clearly, the category $\widehat{e}$ of presheaves over the \emph{point} $e$ (the category with just one object and its identity arrow) corresponds to the category of $\mathbf{U}$-small sets, that will be denoted by $\mathcal{P}_{\mathbf{U}}$.

\end{aforisma}

\begin{definition}\label{Fundamental localizers}
 Let $\mathbf{U}$ be a universe and $\mathcal{C}at_{\mathbf{U}}$ be the category of $\mathbf{U}$-small categories, i.e., categories $A$ such that $\mathsf{Ob} A, \mathsf{Fl} A \in \mathbf{U}$. A set $\mathcal{W}$ of arrows in $\mathcal{C}at_{\mathbf{U}}$ is called \emph{weakly saturated} when it satisfies the following properties:
 
 WS1 All the identity arrows are in $\mathcal{W}$
 
 WS2 If two between three arrows in any commutative triangle are in $\mathcal{W}$, then so is the third one.
 
 WS3 If $i: A \rightarrow X$ and $r: X \rightarrow A$ are two morphisms in $\mathcal{C}at$ such that $ri = 1_{A}$ and $ir \in \mathcal{W}$, then $r \in \mathcal{W}$.
 
A $\mathbf{U}$-\emph{fundamental localizer} $\mathcal{W}$ is a set of arrows in $\mathcal{C}at_{\mathbf{U}}$ satisfying the following conditions:

L1 $\mathcal{W}$ is weakly saturated
 
L2 If $A$ admits a terminal object $e$, then the canonical functor $p_{A}: A \rightarrow e$ from $A$ to the point category $e$ is in $\mathcal{W}$.

L3 If
   \[
   \xymatrix{
     A \ar[dr]_{p} \ar[rr]^{u}   &        &  B \ar[dl]^{q} \\
                                 &  C     ¨&
   }
   \]
is a commutative triangle in $\mathcal{C}at_{\mathbf{U}}$ and for each object $c \in \mathsf{Ob} C$, the functor $u/c: A/c \rightarrow B/c$  is in $\mathcal{W}$, then $u$ is in $\mathcal{W}$.

\end{definition}

\emph{Terminology} - Let $\mathcal{W}$ be a $\mathbf{U}$-fundamental localizer. The elements of $\mathcal{W}$ are called $\mathcal{W}$-equivalences. A $\mathbf{U}$-small category $A$ is called $\mathcal{W}$-\emph{aspherical} if the morphism $p_{A}: A \rightarrow e$ is a $\mathcal{W}$-equivalence. An arrow $u:A \rightarrow B$ of $\mathcal{C}at_{\mathbf{U}}$ is called $\mathcal{W}$-aspherical if for every $b \in \mathsf{Ob} B$, the morphism $u/b: A/b \rightarrow B/b$ is a $\mathcal{W}$-equivalence, and it is called a universal $\mathcal{W}$-equivalence when it is stable under base change, i.e., for every Cartesian square
\[
\xymatrix{
  A' \ar[r]^{v'} \ar[d]_{u'}   &   A \ar[d]^{u} \\
  B' \ar[r]_{v}                &   B
}
\]
the arrow $u': A' \rightarrow B'$ is a $\mathcal{W}$-equivalence.

\begin{definition}
  Fixing a universe $\mathbf{U}$, we define the category of $\mathbf{U}$-homotopy types as being the category:
    $$ \mathsf{Hot}_{\mathbf{U}} =_{df}  (\mathcal{W}_{\infty})^{-1}\mathcal{C}at. $$
  
\end{definition}

\begin{definition}\label{Homotopy invariant}
  A homotopy invariant is a functor
      $$ \mathsf{T}: \mathcal{C}at_{\mathbf{U}} \longrightarrow \mathcal{A} $$
sending $\mathcal{W}$-equivalences into isomorphisms for any $\mathbf{U}$-fundamental localizer $\mathcal{W}$.

\end{definition}

There is a plethora of homotopy invariants appearing in mathematical literature. In this work we focus in cohomology. In what follows, we fix one and for all two universes $\mathbf{U}$ and $\mathbf{V}$ such that $\omega \in \mathbf{U} \in \mathbf{V}$. 

\begin{aforisma}
 Let $\mathcal{A}b_{\mathbf{U}}$ be the $\mathbf{U}$-category of $\mathbf{U}$-small abelian groups, and $\mathsf{C}^{\ast}(\mathcal{A}b_{\mathbf{U}})$ be the category of $\mathbf{U}$-abelian group complexes in cohomological notation. An object of $\mathsf{C}^{\ast}(\mathcal{A}b_{\mathbf{U}})$ is a diagram of abelian groups f the form
 \[
 \xymatrix{
   ..... \ar[r]  &  C^{n} \ar[r]^{d^{n}}  &  C^{n+1} \ar[r]^{d^{n+1}}   & ....
 }
 \]
in $\mathcal{A}b_{\mathbf{U}}$, such that $d^{n+1} \circ d^{n} =0$. More precisely, we can assign to each $\mathbf{U}$-small category $I$, the full subcategory $\mathsf{C}^{\ast}(I, \mathcal{A}b_{\mathbf{U}})$ of the category of functors $F$ from $I$ to $\mathcal{A}b_{\mathbf{U}}$ such that, $F(\beta) \circ F(\alpha) = 0$ for every pair $(\alpha, \beta)$ of composable arrows in $I$, i.e., for which $\beta \circ \alpha$ is defined. Then, choosing the category $I_{\mathbb{Z}}$ generated by the graph
  $$ .... \rightarrow -1 \rightarrow 0 \rightarrow 1 \rightarrow ... \rightarrow n \rightarrow ... $$
one can define $\mathsf{C}^{\ast}(\mathcal{A}b_{\mathbf{U}}) =_{df} \mathsf{C}^{\ast}(I_{\mathbb{Z}}, \mathcal{A}b_{\mathbf{U}})$. If $C^{\bullet}$ is an object of $\mathsf{C}^{\ast}(\mathcal{A}b_{\mathbf{U}})$, then $im (d^{n}) \subset ker (d^{n+1})$, and we can form the abelian groups
         $$ H^{n}(C^{\bullet}) =_{df} ker (d^{n+1})/im (d^{n}), \quad \quad n \in \mathbb{Z}, $$
called the $n$-th cohomology groups of $C^{\bullet}$. If $f^{\bullet}: C^{\bullet} \rightarrow D^{\bullet}$ is a morphism of $\mathsf{C}^{\ast}(\mathcal{A}b_{\mathbf{U}})$, then it induces, for each $n \in\mathbb{Z}$, a canonical group homomorhism
         $$ H^{n}(f^{\bullet}): H^{n}(C^{\bullet}) \longrightarrow H^{n}(D^{\bullet}) $$
between the respective $n$-th cohomology groups. An arrow $f^{\bullet}$ of $\mathsf{C}^{\ast}(\mathcal{A}b_{\mathbf{U}})$ is a quasi-isomorphism when $H^{n}(f^{\bullet})$ is an isomorphism of $\mathbf{U}$-abelian groups for all $n \in \mathbb{Z}$. We denote by $\mathcal{Q}$ the class of quas-isomorphisms in $\mathsf{C}^{\ast}(\mathcal{A}b_{\mathbf{U}})$. The derived category of $\mathbf{U}$-abelian groups is defined as being the localization of $\mathsf{C}^{\ast}(\mathcal{A}b_{\mathbf{U}})$ with respect to quasi-isomorphisms:
   $$ \mathsf{D}(\mathcal{A}b_{\mathbf{U}}) =_{df} \mathcal{Q}^{-1} \mathsf{C}^{\ast}(\mathcal{A}b_{\mathbf{U}}). $$
Hence, the category $\mathsf{D}(\mathcal{A}b_{\mathbf{U}})$ can be characterized by the following universal property: there is a functor
    $$ \gamma: \mathsf{C}^{\ast}(\mathcal{A}b_{\mathbf{U}}) \longrightarrow \mathsf{D}(\mathcal{A}b_{\mathbf{U}}) $$
which is the identity on the objects, sending all quasi-isomorphisms into isomorphisms, and for any functor $F: \mathsf{C}^{\ast}(\mathcal{A}b_{\mathbf{U}}) \rightarrow \mathcal{M}$ satisfying the same condition, there exists a unique functor $\overline{F}: \mathsf{D}^{\ast}(\mathcal{A}b_{\mathbf{U}}) \rightarrow \mathcal{M}$ such that $F = \overline{F} \circ \gamma$.

\end{aforisma}

\begin{aforisma}
For each $\mathbf{U}$-small category $A$, one can form the category $\mathsf{Comp}(A)$ of presheaves over $A$ with values in the category $\mathsf{C}^{\ast}(\mathcal{A}b_{\mathbf{U}})$, and defining the set $\mathcal{Q}_{A}$ of arrows $f: X \rightarrow Y$  in $\mathsf{Comp}(A)$ such that $f_{a}: X_{a} \rightarrow Y_{a}$ is a quasi-isomorphism for every object $a \in \mathsf{Ob} A$, there is a category:
  $$ \mathbf{D}(A) =_{df} (\mathcal{Q}_{A})^{-1} \mathsf{Comp}(A). $$
Morphisms $u: A \rightarrow B$ between $\mathbf{U}$-small categories induce functors defined by composition:
  $$ u^{\ast}: \mathsf{Comp}(B) \longrightarrow \mathsf{Comp}(A), \quad F \mapsto F \circ u, $$
which in turn, admit right adjoints
   $$ u_{\ast}: \mathsf{Comp}(A) \longrightarrow \mathsf{Comp}(B) $$
descending to adjunctions:
    $$ u^{\ast}: \mathbf{D}(B) \longrightarrow \mathbf{D}(A), \quad \mathsf{R}u_{\ast}: \mathbf{D}(A) \longrightarrow \mathbf{D}(B). $$
With the above notations, $u^{\ast}$ is called the \emph{inverse image} of $u$, while $\mathsf{R}u_{\ast}$ is called the \emph{direct cohomological image} of $u$. In particular, for every $\mathbf{U}$-small category $A$, the canonical functor $p_{A}: A \rightarrow e$ from $A$ to the point defines an adjunction:
  $$ (p_{A})^{\ast}: \mathbf{D}(e) \longrightarrow \mathbf{D}(A), \quad \mathsf{R}(p_{A})_{\ast}: \mathbf{D}(A) \longrightarrow \mathbf{D}(e). $$
For each object $M$ (resp. arrow $f$) in $\mathbf{D}(e)$, we denote by $M_{A}$ (resp. $f_{A}$ the associated object $(p_{A})^{\ast}(M)$ (resp. arrow $(p_{A})^{\ast}(f)$) in $\mathbf{D}(A)$. Moreover, given a functor $u: A \rightarrow B$ and an object $b \in \mathsf{Ob} B$, we denote by $F \vert_{A/b}$ the inverse images of objects $F$ in $\mathbf{D}(A)$ with respect to the projection functor $\zeta(u,b): A/b \longrightarrow A$. Finally, for each object of $\mathbf{D}(A)$, we define the notation
   $$ \mathsf{H}^{\ast}(A;F) =_{df} \mathsf{R}(p_{A})_{\ast}(F) $$
to indicate the \emph{total cohomology} of $A$ with coefficients in $F$. The cohomology groups of $A$ with coefficients in $F$ are the $\mathbf{U}$-abelian groups
  $$ H^{n}(A; F) =_{df} H^{n}(\mathsf{H}^{\ast}(A;F)). $$

\end{aforisma}

\begin{aforisma}
 Regarding each $\mathbf{U}$-abelian group $G$ as a complex concentrated in degree zero in $\mathsf{C}^{\ast}(\mathcal{A}b_{\mathbf{U}})$, i.e., a complex $C$ such that $C^{0}=G$ and $C^{n} =0$ for $n \neq 0$, there is a clear embedding of $\mathcal{A}b_{\mathbf{U}}$ into $\mathsf{C}^{\ast}(\mathcal{A}b_{\mathbf{U}})$, and hence, we can project any $\mathbf{U}$-abelian group into $\mathbf{D}(e) \simeq \mathsf{D}(\mathcal{A}b_{\mathbf{U}})$. Therefore, we define, for each $\mathbf{U}$-small category $A$, the abelian groups:
   $$ H^{n}(A) =_{df} H^{n}(A; \mathbb{Z}_{A}), \quad \quad n \in \mathbb{Z}, $$
which are the integer cohmology groups of $A$.

\end{aforisma}

\begin{proposition}\label{Derivator properties}
  \begin{enumerate}
    \item For every $\mathbf{U}$-small category $A$, the family of functors
        $$ a^{\ast}: \mathbf{D}(A) \longrightarrow \mathbf{D}(e), \quad a \in \mathsf{Ob} A,$$
     is jointly conservative, i.e., an arrow $f$ of $\mathbf{D}(A)$ is an isomorphism if, and only if, $f_{a}$ is an isomorphism in $\mathbf{D}(e)$ for all objects $a \in \mathsf{Ob} A$.
    \item For every morphism $u: A \rightarrow B$ between $\mathbf{U}$-small categories and every object $b \in \mathsf{Ob} B$, the canonical natural transformation
       $$ \mathsf{R}(p_{A/b})_{\ast} \zeta(u,b)^{\ast} \longrightarrow b^{\ast} \mathsf{R}u_{\ast} $$
induced from the $2$-square
\[
\xymatrix{
   A/b \ar[d]_{p_{A/b}} \ar[rr]^{\zeta(u,b)}  &   &  A \ar[d]^{u} \\
   e \ar[rr]_{b}   &   &  B
}
\]    
    is an isomorphism. In particular, for every coefficient $F$ in $\mathbf{D}(A)$, the canonical arrow
        $$ \mathsf{H}^{\ast}(A/b; F \vert_{A/b}) \longrightarrow (\mathsf{R}u_{\ast} F)_{b} $$
    is an isomorphism.
    \item For every morphism $u: A \rightarrow B$ between $\mathbf{U}$-small categories and every object $b \in \mathsf{Ob} B$, the canonical natural transformation
       $$ \mathsf{R}(u/b)_{\ast} (j_{A,b})^{\ast} \longrightarrow (j_{B,b})^{\ast} \mathsf{R}u_{\ast} $$
induced from the Cartesian $2$-square
\[
\xymatrix{
   A/b \ar[d]_{u/b} \ar[rr]^{j_{A,b}}  &   &  A \ar[d]^{u} \\
   B/b \ar[rr]_{j_{B,b}}   &   &  B
}
\]    
    is an isomorphism. 
    
  \end{enumerate}

\end{proposition}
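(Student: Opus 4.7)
The proposition collects the two main axioms singling out a Grothendieck derivator: joint conservativity of the pointwise evaluations, and the Beck--Chevalley comparison over a comma square (Part (2) at a single point $b\in B$, Part (3) its relative counterpart over the whole slice $B/b$). My plan is to exploit that $\mathsf{Comp}(A)$ is a Grothendieck abelian category, hence admits an injective model structure whose weak equivalences are exactly $\mathcal{Q}_{A}$; consequently $\mathbf{D}(A)$ is the associated homotopy category, and $\mathsf{R}u_{\ast}F$ is computed by applying $u_{\ast}$ to any injective resolution $F\to I$.

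For Part (1), I would replace a given $f\colon F\to G$ of $\mathbf{D}(A)$ by a morphism $\tilde f$ between bifibrant representatives in $\mathsf{Comp}(A)$. Since the localization functor $\gamma$ inverts exactly $\mathcal{Q}_{A}$, and any arrow between bifibrant objects becomes invertible in the homotopy category iff it is itself a weak equivalence, $f$ is an iso in $\mathbf{D}(A)$ iff $\tilde f \in \mathcal{Q}_{A}$; by the very definition of $\mathcal{Q}_A$ this is the same as saying that $a^{\ast}\tilde f$ is a quasi-isomorphism for every $a\in\mathsf{Ob}\,A$, i.e.\ that $a^{\ast}f$ is an iso in $\mathbf{D}(e)$ for every such $a$.

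For Part (2), the pointwise formula for right Kan extensions yields, at the underived level,
$$(u_{\ast}F)(b)\;=\;\varprojlim_{(a,\alpha)\in A/b}F(a)\;=\;(p_{A/b})_{\ast}\,\zeta(u,b)^{\ast}F,$$
so that $b^{\ast}u_{\ast}\cong (p_{A/b})_{\ast}\zeta(u,b)^{\ast}$ as functors $\mathsf{Comp}(A)\to\mathsf{C}^{\ast}(\mathcal{A}b_{\mathbf{U}})$, and the natural transformation of the statement is the derived Beck--Chevalley mate of this identity, induced by the universal 2-cell $u\,\zeta(u,b)\Rightarrow b\,p_{A/b}$ of the comma square and the adjunctions $u^{\ast}\dashv u_{\ast}$, $\zeta(u,b)^{\ast}\dashv \zeta(u,b)_{\ast}$, etc. To confirm that it remains an iso after derivation, I would pick an injective resolution $F\to I$; then $u_{\ast}I$ represents $\mathsf{R}u_{\ast}F$ and $b^{\ast}u_{\ast}I=(p_{A/b})_{\ast}\zeta(u,b)^{\ast}I$ on the nose. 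The crucial observation is that $\zeta(u,b)\colon A/b\to A$ is a discrete fibration (its fibre over $a$ being the set $Hom_{B}(u(a),b)$), so its left adjoint $\zeta(u,b)_{!}$ admits a pointwise formula as a direct sum over the fibres and is therefore exact; by the usual adjunction argument $\zeta(u,b)^{\ast}$ then preserves injective objects, so $\zeta(u,b)^{\ast}I$ is an injective resolution of $\zeta(u,b)^{\ast}F$, $(p_{A/b})_{\ast}\zeta(u,b)^{\ast}I$ computes $\mathsf{R}(p_{A/b})_{\ast}\zeta(u,b)^{\ast}F$, and feeding $I$ through the underived equality exhibits the asserted isomorphism in $\mathbf{D}(e)$.

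For Part (3), I would reduce to (1) and (2) by pointwise evaluation on $B/b$. At each $(b_{0},\beta\colon b_{0}\to b)\in B/b$, applying $(b_{0},\beta)^{\ast}$ to the natural transformation of (3) reproduces, under the canonical identification $(A/b)/(b_{0},\beta)\simeq A/b_{0}$, the arrow $\mathsf{H}^{\ast}(A/b_{0};F\vert_{A/b_{0}})\to(\mathsf{R}u_{\ast}F)_{b_{0}}$, which is an iso by Part (2) applied respectively to $u$ and to $u/b$. Hence the transformation of (3) is invertible at every object of $B/b$, and joint conservativity (Part (1)) for the small category $B/b$ promotes this to a global iso in $\mathbf{D}(B/b)$. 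I expect the main obstacle to lie in the derivation step of (2), specifically in showing that $\zeta(u,b)^{\ast}$ preserves injectives; for an arbitrary functor this would fail, and the argument truly hinges on $\zeta(u,b)$ being a discrete fibration. A variant bypassing injectives would compute both sides via the Bousfield--Kan cobar resolution indexed by the nerve of $A/b$ and appeal to its functoriality.
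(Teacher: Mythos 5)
The paper gives no proof of Proposition \ref{Derivator properties}: it is stated as a known package of properties of the prederivator $A \mapsto \mathbf{D}(A)$ (the derivator axioms for presheaves of complexes, available in the cited literature on Grothendieck derivators), so there is nothing in the text to compare your argument against line by line. On its own terms your sketch is the standard and correct route: (1) via bifibrant representatives in the injective model structure on $\mathsf{Comp}(A)$, using that a map between bifibrant objects inverted in the homotopy category is a weak equivalence and that $\mathcal{Q}_A$ is defined objectwise; (2) via the underived identity $b^{\ast}u_{\ast} = (p_{A/b})_{\ast}\zeta(u,b)^{\ast}$ (which is indeed the correct pointwise Kan-extension formula in the paper's presheaf convention, where the indexing category is $(A/b)^{op}$) together with the observation that $\zeta(u,b)$ is a discrete fibration, so $\zeta(u,b)_{!}$ is a fibrewise direct sum, hence exact; (3) by evaluating at objects of $B/b$, using $(A/b)/(b_{0},\beta)\cong A/b_{0}$, and invoking (1) for $B/b$.

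Two refinements you should make explicit. First, $\mathsf{C}^{\ast}(\mathcal{A}b_{\mathbf{U}})$ here consists of unbounded complexes, so ``injective resolution'' must mean a fibrant (K-injective) resolution in the injective model structure, and the statement you need is that $\zeta(u,b)^{\ast}$ is right Quillen for the injective model structures; this follows from your own key point, since the fibrewise-sum formula shows $\zeta(u,b)_{!}$ preserves monomorphisms and quasi-isomorphisms, so the argument goes through but ``preserves injective objects'' is the bounded-below phrasing of what is really needed. Second, in both (2) and (3) you assert, rather than check, that the isomorphisms you produce are the canonical comparison maps of the statement: in (2) that the derived mate evaluated on a fibrant object coincides with the underived equality, and in (3) that pasting the Cartesian square with the comma square of $u/b$ at $(b_{0},\beta)$ recovers the comma square of $u$ at $b_{0}$ compatibly with mates. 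Both are routine 2-categorical verifications, but they are exactly the places where a sign-of-variance or direction error would hide, so they deserve a sentence each in a complete write-up. With those points added, your proposal is a complete proof of the proposition that the paper leaves to the reader.
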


\begin{definition}\label{Cohomological equivalence}
 A morphism $u: A \rightarrow B$ of $\mathbf{U}$-small categories is called a \emph{cohomological equivalence} if the canonical natural transformation
   $$ \mathsf{R}(p_{B})_{\ast}(p_{B})^{\ast} \longrightarrow (\mathsf{R}p_{A})_{\ast}(p_{A})^{\ast} $$
is an isomorphism.

\end{definition}

\begin{proposition}
 The localizing functor
    $$ \gamma: \mathcal{C}at_{\mathbf{U}} \longrightarrow (\mathcal{W}_{\mathbf{D}})^{-1} \mathcal{C}at_{\mathbf{U}} $$
sends an arrow $u$ of $\mathcal{C}at_{\mathbf{U}}$ to an isomorphism precisely when $u$ is a cohomological equivalence. In other words, the set of cohomological equivalences is strongly saturated

\end{proposition}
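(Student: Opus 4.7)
The claim splits into two directions. In one direction, if $u$ is a cohomological equivalence, then $\gamma(u)$ is an isomorphism in $(\mathcal{W}_{\mathbf{D}})^{-1}\mathcal{C}at_{\mathbf{U}}$ by the very construction of $\gamma$ as the localization at $\mathcal{W}_{\mathbf{D}}$. The content of the proposition is therefore the converse: every arrow sent by $\gamma$ to an isomorphism is already a cohomological equivalence.

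My plan is to construct a functor $\Phi$ whose fiber over the isomorphisms is exactly $\mathcal{W}_{\mathbf{D}}$, and then invoke the universal property of $\gamma$. Explicitly, I will consider
\[ \Phi: \mathcal{C}at_{\mathbf{U}} \longrightarrow \mathrm{End}(\mathbf{D}(e))^{op}, \qquad A \longmapsto \mathsf{R}(p_A)_{\ast}(p_A)^{\ast}, \]
where an arrow $u: A \to B$ is sent to the natural transformation $\mathsf{R}(p_B)_{\ast}(p_B)^{\ast} \to \mathsf{R}(p_A)_{\ast}(p_A)^{\ast}$ from Definition~\ref{Cohomological equivalence}, obtained from the unit $1 \to \mathsf{R}u_{\ast}u^{\ast}$ together with the equality $(p_A)^{\ast} = u^{\ast}(p_B)^{\ast}$ and the canonical isomorphism $\mathsf{R}(p_A)_{\ast} \simeq \mathsf{R}(p_B)_{\ast}\mathsf{R}u_{\ast}$. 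By construction, $\Phi(u)$ is an isomorphism in $\mathrm{End}(\mathbf{D}(e))^{op}$ if and only if $u$ is a cohomological equivalence.

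The first step will be to verify that $\Phi$ is indeed a functor. This reduces to showing that for a pair of composable arrows $A \xrightarrow{u} B \xrightarrow{v} C$, the units of $(u^{\ast}, \mathsf{R}u_{\ast})$ and $(v^{\ast}, \mathsf{R}v_{\ast})$ compose to the unit of $((vu)^{\ast}, \mathsf{R}(vu)_{\ast})$; this follows from the pseudofunctoriality of $A \mapsto \mathbf{D}(A)$ and from the triangle identities of the adjunctions, both being consequences of the general framework underlying Proposition~\ref{Derivator properties}. Once functoriality is in place, $\Phi$ tautologically sends every cohomological equivalence to an isomorphism, hence by the universal property of $\gamma$ it factors uniquely as $\Phi = \overline{\Phi}\circ\gamma$ for some $\overline{\Phi}:(\mathcal{W}_{\mathbf{D}})^{-1}\mathcal{C}at_{\mathbf{U}} \to \mathrm{End}(\mathbf{D}(e))^{op}$. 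The desired converse is then immediate: if $\gamma(u)$ is an isomorphism, so is $\Phi(u) = \overline{\Phi}(\gamma(u))$, whence $u$ is a cohomological equivalence.

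The principal technical obstacle lies in establishing the \emph{strict} functoriality of $\Phi$, because the assignments $u \mapsto u^{\ast}$ and $u \mapsto \mathsf{R}u_{\ast}$ are naturally only pseudofunctorial. I expect this to be resolvable either by a standard strictification argument or, more elegantly, by formulating $\Phi$ as a pseudofunctor into a 2-category and appealing to the corresponding 2-categorical universal property of the localization; once this coherence bookkeeping is settled, the rest of the argument collapses onto the universal property of $\gamma$.
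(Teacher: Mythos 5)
The paper states this proposition without proof, so there is no argument of the authors to compare yours against; judged on its own, your strategy is sound and is in fact the standard one (it is essentially how strong saturation of the class of $\mathbf{D}$-equivalences of a derivator is established in the literature the paper leans on, e.g.\ Maltsiniotis and Grothendieck's \emph{D\'erivateurs}). The key point — a class of arrows that is exactly the preimage of the isomorphisms under some functor defined on $\mathcal{C}at_{\mathbf{U}}$ is automatically strongly saturated, by the universal property of $\gamma$ — is correct (note it silently uses the reading, not made explicit in the paper, that $\mathcal{W}_{\mathbf{D}}$ \emph{is} the class of cohomological equivalences, which makes the easy direction tautological), and reducing everything to the functoriality of $\Phi\colon A\mapsto \mathsf{R}(p_A)_{\ast}(p_A)^{\ast}$, $u\mapsto$ the canonical unit/base-change transformation, is the right reduction.

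Two remarks on the ``principal technical obstacle''. First, it is milder than you suggest: since inverse images are defined by precomposition and passed to the localizations $\mathbf{D}(-)$, one has the strict equality $(p_A)^{\ast}=u^{\ast}(p_B)^{\ast}$, and the required identity $\Phi(vu)=\Phi(u)\circ\Phi(v)$ is an equality of natural transformations between \emph{fixed} endofunctors of $\mathbf{D}(e)$; it follows from the standard compatibility of units with composition of adjunctions (the unit of $((vu)^{\ast},\mathsf{R}(vu)_{\ast})$ is the pasting of the units for $u$ and $v$ under the canonical isomorphism $\mathsf{R}(vu)_{\ast}\cong\mathsf{R}v_{\ast}\mathsf{R}u_{\ast}$ of right adjoints to the same functor), i.e.\ a mate/triangle-identity computation, so strict functoriality holds with the canonical choices and no strictification machinery is needed. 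Second, the fallback you propose — a 2-categorical universal property of the localization — is the one step I would not rely on: the target $\mathrm{End}(\mathbf{D}(e))^{op}$ is a 1-category, so there is no room for pseudofunctor coherence cells, and the 1-categorical localization $\gamma$ has its universal property only for honest functors; the direct verification above is what is actually needed. With that check written out (plus the trivial preservation of identities and the remark that size is harmless because the universal property of $\gamma$ admits targets in the larger universe $\mathbf{V}$), your argument is complete.
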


\begin{corollary}
  The set of cohomological equivalences in $\mathcal{C}at_{\mathbf{U}}$ is weakly saturated.

\end{corollary}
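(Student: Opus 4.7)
The approach is to observe that the previous proposition has already done essentially all the work: it identifies the cohomological equivalences $\mathcal{W}_{\mathbf{D}}$ with exactly those arrows of $\mathcal{C}at_{\mathbf{U}}$ sent to isomorphisms by the localization functor $\gamma$. Thus the corollary reduces to the purely formal fact that, for \emph{any} functor $F \colon \mathcal{C} \to \mathcal{D}$, the class $\mathcal{W}_{F} := \{u \in \mathrm{Fl}(\mathcal{C}) : F(u) \text{ is invertible}\}$ satisfies conditions \textbf{WS1}, \textbf{WS2}, \textbf{WS3} of Definition~\ref{Fundamental localizers}. Applying this to $F = \gamma$ yields the desired weak saturation.

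The three conditions are verified in turn. For \textbf{WS1}, one has $\gamma(1_{A}) = 1_{\gamma(A)}$, which is an isomorphism, hence every identity lies in $\mathcal{W}_{\mathbf{D}}$. For \textbf{WS2}, suppose $v = w \circ u$ in $\mathcal{C}at_{\mathbf{U}}$ and that two of the three arrows $u,v,w$ are cohomological equivalences; applying $\gamma$ gives $\gamma(v) = \gamma(w) \circ \gamma(u)$, and the usual $2$-out-of-$3$ property of isomorphisms in any category implies that the third image is also an isomorphism, so the third arrow belongs to $\mathcal{W}_{\mathbf{D}}$. For \textbf{WS3}, let $i \colon A \to X$ and $r \colon X \to A$ satisfy $r \circ i = 1_{A}$ and $i \circ r \in \mathcal{W}_{\mathbf{D}}$. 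Setting $\varphi := \gamma(i \circ r) = \gamma(i) \circ \gamma(r)$, which is invertible by hypothesis, the relation $\gamma(r) \circ \gamma(i) = 1_{\gamma(A)}$ exhibits $\gamma(i)$ as a right inverse of $\gamma(r)$, while $\varphi^{-1} \circ \gamma(i)$ is a left inverse of $\gamma(r)$, since
\[
\bigl(\varphi^{-1} \circ \gamma(i)\bigr) \circ \gamma(r) \;=\; \varphi^{-1} \circ \bigl(\gamma(i) \circ \gamma(r)\bigr) \;=\; \varphi^{-1} \circ \varphi \;=\; 1.
\]
An arrow possessing both a left and a right inverse is automatically an isomorphism, so $\gamma(r)$ is invertible and $r \in \mathcal{W}_{\mathbf{D}}$.

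There is no serious obstacle in this argument: the corollary is a soft consequence of the preceding proposition. The substantive content --- strong saturation of $\mathcal{W}_{\mathbf{D}}$ --- has already been established using the derivator structure on $\mathbf{D}$ and the universal property of the localization $\gamma$. All that remains in the corollary is the category-theoretic triviality that the preimage, under any functor, of the isomorphisms forms a weakly saturated class, which is what the verification of \textbf{WS1}--\textbf{WS3} above carries out.
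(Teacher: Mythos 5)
Your proposal is correct and follows exactly the route the paper intends: the corollary is stated as an immediate consequence of the preceding proposition (strong saturation of the cohomological equivalences, i.e. they are precisely the arrows inverted by the localizing functor $\gamma$), and your explicit verification of WS1--WS3 for the preimage of the isomorphisms under $\gamma$ — including the retract argument for WS3 — is just the spelled-out version of that implication, which the paper leaves implicit.
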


\begin{proposition}
  If $u,v: A \rightarrow B$ are two homotopic equivalent arrows in $\mathcal{C}at_{\mathbf{U}}$, then their inverse images $u^{\ast}$ and $v^{\ast}$ are isomorphic.

\end{proposition}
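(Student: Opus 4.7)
The plan is to interpret ``homotopic equivalent'' in the natural $2$-categorical sense, namely that $u$ and $v$ are connected by a natural isomorphism $\alpha : u \xrightarrow{\sim} v$ in $\mathcal{C}at_{\mathbf{U}}$, so that each component $\alpha_a : u(a) \to v(a)$ is invertible in $B$. The strategy is to exhibit $u^* \cong v^*$ by whiskering $\alpha$ with coefficients, argue that the resulting comparison is already a quasi-isomorphism by definition of $\mathcal{Q}_A$, and then verify naturality in the coefficients after localizing.

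First I work at the level of presheaf complexes, before descending to the derived categories. Since $u^*(F) = F \circ u^{op}$ for $F \in \mathsf{Comp}(B)$, the reversed $2$-cell $\alpha^{op} : v^{op} \Rightarrow u^{op}$ whiskers with $F$ to produce a morphism $F \ast \alpha^{op} : v^*F \Rightarrow u^*F$ in $\mathsf{Comp}(A)$ whose component at $a \in \mathsf{Ob} A$ is the chain map $F(\alpha_a) : F(v(a)) \to F(u(a))$. Functoriality of $F$ sends invertibility of $\alpha_a$ in $B$ to degreewise invertibility of $F(\alpha_a)$, hence \emph{a fortiori} to a quasi-isomorphism. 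Thus $F \ast \alpha^{op}$ belongs to $\mathcal{Q}_A$ and descends to an isomorphism $\overline{\alpha}_F : v^*F \xrightarrow{\sim} u^*F$ in $\mathbf{D}(A)$.

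Naturality in $F$ follows immediately from the interchange law: for any $\varphi : F \to G$ in $\mathsf{Comp}(B)$ the equality $\overline{\alpha}_G \circ v^*(\varphi) = u^*(\varphi) \circ \overline{\alpha}_F$ already holds in $\mathsf{Comp}(A)$, and it is preserved by the localization $\gamma_A$. This assembles the family $\{\overline{\alpha}_F\}_F$ into a natural isomorphism $v^* \Rightarrow u^*$ of functors $\mathbf{D}(B) \to \mathbf{D}(A)$, as required. I do not expect a serious obstacle here, since the construction is purely $2$-categorical and requires no deep property of the derivator; the one delicate point is pinning down the correct meaning of ``homotopic equivalent,'' because a merely non-invertible natural transformation between $u$ and $v$ would not suffice in general (e.g., with $A = e$, $B = [1]$, $u = 0$, $v = 1$ and the presheaf $F$ having $F(0) = 0$ and $F(1) = \mathbb{Z}$, one gets $u^* F \not\cong v^* F$).
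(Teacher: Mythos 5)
Your whiskering argument is correct for the reading you chose: a natural isomorphism $\alpha: u \Rightarrow v$ gives, for each $F$ in $\mathsf{Comp}(B)$, the objectwise-invertible map with components $F(\alpha_{a}): F(v(a)) \rightarrow F(u(a))$, natural in $a$ and in $F$; since $u^{\ast}$ and $v^{\ast}$ preserve the objectwise quasi-isomorphisms $\mathcal{Q}_{B} \rightarrow \mathcal{Q}_{A}$, everything descends through the localizations and yields a natural isomorphism $v^{\ast} \cong u^{\ast}: \mathbf{D}(B) \rightarrow \mathbf{D}(A)$. The paper states this proposition without any proof, so there is no argument of the authors to compare yours against; yours is the standard $2$-functoriality argument and I see no gap in it as a proof of the natural-isomorphism version.

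The delicate point you flag at the end is, however, more than a side remark: it is where your reading diverges from what the paper almost certainly intends. In the Grothendieck--Maltsiniotis homotopy theory of $\mathcal{C}at_{\mathbf{U}}$ that the paper is invoking, ``homotopic'' functors means connected by a zig-zag of \emph{arbitrary} natural transformations, and the corollary immediately following the proposition (``homotopy equivalent categories are cohomologically equivalent; \emph{in particular} any two equivalent categories are cohomologically equivalent'') only has content beyond your version if homotopy equivalence is strictly weaker than categorical equivalence, i.e.\ if the connecting transformations need not be invertible. Under that weaker reading your own example ($A=e$, $B=[1]$, $u=0$, $v=1$, $F(0)=0$, $F(1)=\mathbb{Z}$) shows the proposition is false as literally stated: $u^{\ast}F \not\cong v^{\ast}F$ in $\mathbf{D}(e)$ even though $u$ and $v$ are homotopic and both are weak equivalences. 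What survives for a non-invertible $\alpha: u \Rightarrow v$ is only the whiskered natural transformation $v^{\ast} \Rightarrow u^{\ast}$, which becomes invertible after restricting to coefficients on which the maps $F(\alpha_{a})$ are quasi-isomorphisms --- in particular on locally constant coefficients and on objects of the form $(p_{B})^{\ast}M$, where $u^{\ast}(p_{B})^{\ast}M = (p_{A})^{\ast}M = v^{\ast}(p_{B})^{\ast}M$; that restricted statement is what the subsequent corollary on cohomological equivalence (which only concerns $\mathsf{R}(p_{B})_{\ast}(p_{B})^{\ast}$) actually needs. So your proof is sound, but you should state explicitly that you are proving the proposition only for natural isomorphisms, and note that with that hypothesis it cannot by itself deliver the paper's corollary except in the special case of categorical equivalences; bridging the gap requires the separate constant-coefficient homotopy-invariance argument.
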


\begin{corollary}
 If two $\mathbf{U}$-small categories are homotopic equivalent, then they are cohomological equivalent. In particular, any two equivalent $\mathbf{U}$-small categories are cohomological equivalent.

\end{corollary}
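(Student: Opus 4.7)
The plan is to use the preceding proposition to show that, for a homotopy equivalence $u : A \to B$ with quasi-inverse $v : B \to A$, the inverse image functor $u^{\ast} : \mathbf{D}(B) \to \mathbf{D}(A)$ is an equivalence of derived categories, and then to read off the cohomological equivalence from the fact that the unit of an adjoint equivalence is an isomorphism.

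First I would unfold the hypothesis: a homotopy equivalence of $\mathbf{U}$-small categories yields functors $u : A \to B$ and $v : B \to A$ with $vu$ homotopy equivalent to $1_{A}$ and $uv$ homotopy equivalent to $1_{B}$. By the preceding proposition, $(vu)^{\ast} \cong (1_{A})^{\ast} = 1_{\mathbf{D}(A)}$ and $(uv)^{\ast} \cong (1_{B})^{\ast} = 1_{\mathbf{D}(B)}$. Using the contravariant functoriality of the inverse image, $(vu)^{\ast} = u^{\ast} v^{\ast}$ and $(uv)^{\ast} = v^{\ast} u^{\ast}$, so $u^{\ast}$ and $v^{\ast}$ are quasi-inverse equivalences between $\mathbf{D}(B)$ and $\mathbf{D}(A)$.

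Next I would invoke that $u^{\ast}$ is left adjoint to $\mathsf{R}u_{\ast}$. When a left adjoint is an equivalence, its right adjoint is automatically a quasi-inverse, so both the unit $\eta : 1_{\mathbf{D}(B)} \to \mathsf{R}u_{\ast} u^{\ast}$ and the counit $\varepsilon : u^{\ast} \mathsf{R}u_{\ast} \to 1_{\mathbf{D}(A)}$ are isomorphisms. From the factorisation $p_{A} = p_{B} \circ u$ one gets $(p_{A})^{\ast} = u^{\ast} (p_{B})^{\ast}$ and, by uniqueness of right adjoints, a canonical isomorphism $\mathsf{R}(p_{A})_{\ast} \cong \mathsf{R}(p_{B})_{\ast} \circ \mathsf{R}u_{\ast}$. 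The canonical natural transformation of Definition \ref{Cohomological equivalence} factors as
$$\mathsf{R}(p_{B})_{\ast}(p_{B})^{\ast} \xrightarrow{\mathsf{R}(p_{B})_{\ast} \, \eta \, (p_{B})^{\ast}} \mathsf{R}(p_{B})_{\ast} \mathsf{R}u_{\ast} u^{\ast} (p_{B})^{\ast} \cong \mathsf{R}(p_{A})_{\ast}(p_{A})^{\ast},$$
and since $\eta$ is an isomorphism, the whole composite is an isomorphism. Thus $u$ is a cohomological equivalence, which proves the first assertion.

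For the \emph{in particular} clause, note that an equivalence of categories $u : A \to B$ admits a quasi-inverse $v$ together with natural isomorphisms $vu \cong 1_{A}$ and $uv \cong 1_{B}$; natural isomorphisms are a special case of the notion of homotopy of functors used in the preceding proposition, so an equivalence of categories is in particular a homotopy equivalence, and the previous paragraph applies. The principal obstacle I anticipate is the bookkeeping needed to identify the transformation built from the adjunction $(u^{\ast}, \mathsf{R}u_{\ast})$ with the \emph{canonical} one of Definition \ref{Cohomological equivalence}, together with the compatibility $\mathsf{R}(p_{B} \circ u)_{\ast} \cong \mathsf{R}(p_{B})_{\ast} \mathsf{R}u_{\ast}$; these are standard $2$-functorial facts about derived direct images in the formalism of derivators used in the preceding propositions, but they carry the argument.
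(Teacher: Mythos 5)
Your argument is correct and follows exactly the route the paper intends: the corollary is stated as an immediate consequence of the preceding proposition (the paper records no separate proof), and your derivation — homotopic composites give $u^{\ast}v^{\ast}\cong 1$ and $v^{\ast}u^{\ast}\cong 1$, hence $u^{\ast}$ is an equivalence, hence the unit of $u^{\ast}\dashv\mathsf{R}u_{\ast}$ is invertible and the canonical map $\mathsf{R}(p_{B})_{\ast}(p_{B})^{\ast}\to\mathsf{R}(p_{A})_{\ast}(p_{A})^{\ast}$ factors through it via $(p_{A})^{\ast}=u^{\ast}(p_{B})^{\ast}$ and $\mathsf{R}(p_{A})_{\ast}\cong\mathsf{R}(p_{B})_{\ast}\mathsf{R}u_{\ast}$ — is the natural way to fill in the details, including the treatment of the \emph{in particular} clause.
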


\begin{proposition}
 If $A$ is a $\mathbf{U}$-small category with a terminal object, then the canonical arrow $p_{A}:A \rightarrow e$ from $A$ to the point is a cohomological equivalence.

\end{proposition}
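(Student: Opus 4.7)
The plan is to exhibit $A$ and $e$ as homotopy equivalent in $\mathcal{C}at_{\mathbf{U}}$ with $p_A$ playing the role of one of the structure maps, then transfer this to the derived level via the preceding proposition on homotopic equivalent arrows, and finally read off the definition of cohomological equivalence.

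First I would fix a terminal object $t \in \mathsf{Ob} A$ and introduce the functor $i \colon e \to A$ selecting $t$. Strictly one has $p_A \circ i = \mathrm{id}_e$. In the other direction, the unique arrows $\eta_a \colon a \to t$ for $a \in \mathsf{Ob} A$ assemble into a natural transformation $\eta \colon \mathrm{id}_A \Rightarrow i \circ p_A$, so the endofunctors $\mathrm{id}_A$ and $i \circ p_A$ are homotopic equivalent arrows of $\mathcal{C}at_{\mathbf{U}}$. Thus $(p_A, i, \eta)$ exhibits a homotopy equivalence between $A$ and $e$.

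Applying the preceding proposition to $\eta$, I get $(p_A)^{\ast} \circ i^{\ast} = (i \circ p_A)^{\ast} \cong (\mathrm{id}_A)^{\ast} = \mathrm{id}_{\mathbf{D}(A)}$, while $i^{\ast} \circ (p_A)^{\ast} = (p_A \circ i)^{\ast} = \mathrm{id}_{\mathbf{D}(e)}$ already on the nose. Hence $(p_A)^{\ast} \colon \mathbf{D}(e) \to \mathbf{D}(A)$ is an equivalence of categories with quasi-inverse $i^{\ast}$; in particular, $(p_A)^{\ast}$ is fully faithful.

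From fully faithfulness of $(p_A)^{\ast}$, the unit $\mathrm{id}_{\mathbf{D}(e)} \to \mathsf{R}(p_A)_{\ast} \circ (p_A)^{\ast}$ of the adjunction $(p_A)^{\ast} \dashv \mathsf{R}(p_A)_{\ast}$ is an isomorphism. Since $p_e = \mathrm{id}_e$, the composite $\mathsf{R}(p_e)_{\ast}(p_e)^{\ast}$ is the identity on $\mathbf{D}(e)$, and under this identification the canonical natural transformation of Definition \ref{Cohomological equivalence} is precisely this unit. Hence $p_A$ is a cohomological equivalence. The one non-formal step is the application of the preceding proposition, where the $2$-cell $\eta$ in $\mathcal{C}at_{\mathbf{U}}$ is promoted to an honest isomorphism between inverse-image functors on the derived level; everything else reduces to $2$-functoriality of $(-)^{\ast}$ and the standard characterization of fully faithful left adjoints via their unit.
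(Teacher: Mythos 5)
Your proof is correct, and it follows exactly the route the paper sets up: the paper states this proposition without proof, but the intended argument is clearly the one you give, namely exhibiting $i\colon e\to A$ picking the terminal object, using the unique arrows $a\to t$ as a natural transformation $\mathrm{id}_{A}\Rightarrow i\circ p_{A}$, and invoking the preceding proposition on homotopic equivalent arrows to make $(p_{A})^{\ast}$ an equivalence, whence the unit $\mathrm{id}_{\mathbf{D}(e)}\to \mathsf{R}(p_{A})_{\ast}(p_{A})^{\ast}$ is invertible. A small merit of your write-up is that it proves the statement for the specific arrow $p_{A}$ as Definition \ref{Cohomological equivalence} requires, rather than only the weaker assertion of the corollary that $A$ and $e$ are cohomologically equivalent as categories.
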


\begin{lemma}
 Let 
 \[
 \xymatrix{
   A \ar[dr]_{p} \ar[rr]^{u}  &    &  B \ar[dl]^{q} \\
                              &  C &
 }
 \]
be a commutative triangle of $\mathbf{U}$-small categories. If $u$ is a local $\mathcal{W}$-equivalence over $C$, then the canonical natural transformation
   $$ \mathsf{R}q_{\ast} q^{\ast} \longrightarrow \mathsf{R}p_{\ast} p^{\ast} $$
is an isomorphism, i.e., $u$ is also a local cohomological equivalence over $C$.

\end{lemma}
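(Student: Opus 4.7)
The plan is to verify the claimed isomorphism pointwise over $C$ and, at each point, identify the stalks with the cohomology of slice categories so that the hypothesis can be applied directly. By part~(1) of Proposition~\ref{Derivator properties}, the family of fiber functors $c^\ast : \mathbf{D}(C) \to \mathbf{D}(e)$ indexed by $c \in \mathsf{Ob}(C)$ is jointly conservative, so it suffices to prove that, for every $F \in \mathbf{D}(C)$ and every $c \in \mathsf{Ob}(C)$, the induced morphism $c^\ast\mathsf{R}q_\ast q^\ast F \to c^\ast\mathsf{R}p_\ast p^\ast F$ is an isomorphism in $\mathbf{D}(e)$.

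For each such $c$, part~(2) of the same proposition, applied in turn to $q$ and to $p$, yields canonical base-change isomorphisms
\[
c^\ast\mathsf{R}q_\ast q^\ast F \;\cong\; \mathsf{R}(p_{B/c})_\ast \zeta(q,c)^\ast q^\ast F, \qquad c^\ast\mathsf{R}p_\ast p^\ast F \;\cong\; \mathsf{R}(p_{A/c})_\ast \zeta(p,c)^\ast p^\ast F.
\]
The tautological assignments $(b,\beta)\mapsto \beta$ on $B/c$ and $(a,\alpha)\mapsto \alpha$ on $A/c$ define natural transformations from $q\circ\zeta(q,c)$ and $p\circ\zeta(p,c)$ to the respective constant functors at $c$. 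Invoking the earlier proposition that homotopic functors have isomorphic inverse images, one obtains $\zeta(q,c)^\ast q^\ast F \cong (p_{B/c})^\ast c^\ast F$ and analogously on the $A$-side, so that both stalks are rewritten as cohomologies of the slice with the constant coefficient $c^\ast F$:
\[
c^\ast\mathsf{R}q_\ast q^\ast F \;\cong\; \mathsf{R}(p_{B/c})_\ast (p_{B/c})^\ast (c^\ast F), \qquad c^\ast\mathsf{R}p_\ast p^\ast F \;\cong\; \mathsf{R}(p_{A/c})_\ast (p_{A/c})^\ast (c^\ast F).
\]

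Under these identifications, the natural transformation in question — which is $\mathsf{R}q_\ast$ applied to the unit $q^\ast F \to \mathsf{R}u_\ast u^\ast q^\ast F$ coming from the triangle $p = q\circ u$ — becomes, up to the above comparisons, the canonical morphism associated via Definition~\ref{Cohomological equivalence} with $u/c: A/c \to B/c$, evaluated at $c^\ast F$. By hypothesis $u/c$ lies in $\mathcal{W}$; since the class of cohomological equivalences is itself a fundamental localizer by the strong-saturation proposition proved earlier, it contains $\mathcal{W}_\infty$ and, in the intended reading, any $\mathcal{W}$ at which the statement is meant to apply, so $u/c$ is a cohomological equivalence and the comparison is an isomorphism in $\mathbf{D}(e)$. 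The main obstacle I expect is the $2$-categorical diagram chase required to identify the restriction of the natural transformation $\mathsf{R}q_\ast q^\ast \to \mathsf{R}p_\ast p^\ast$ with the cohomological-equivalence comparison for $u/c$: one must verify that the base-change isomorphism of part~(2), composed with the homotopy-invariance isomorphism, intertwines the two natural transformations, which is a formal but delicate verification using the functoriality of base-change with respect to vertical composition of the two stacked $2$-squares relating $A/c \to B/c \to e$ to $A \to B \to C$.
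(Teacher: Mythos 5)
The paper states this lemma without proof, so your proposal has to stand on its own, and it does not: the fatal step is the replacement of the restricted coefficient $\zeta(q,c)^{\ast}q^{\ast}F$ by the constant coefficient $(p_{B/c})^{\ast}c^{\ast}F$. The tautological arrows $(b,\beta)\mapsto\beta$ give only a natural transformation from $q\circ\zeta(q,c)$ to the constant functor at $c$ — a homotopy, not a homotopy equivalence — and inverse images along two functors related by a natural transformation are \emph{not} isomorphic on arbitrary coefficients (the earlier proposition you invoke is only reliable for constant, or locally constant, coefficients). Concretely, take $B=C=[1]$ (one arrow $0\to 1$), $q=\mathrm{id}$, $c=1$, and $F\in\mathbf{D}(C)$ with $F(1)=\mathbb{Z}$, $F(0)=0$: then $\zeta(q,1)^{\ast}q^{\ast}F$ has value $0$ at the object $(0,0\to 1)$ of $B/1$, while $(p_{B/1})^{\ast}1^{\ast}F$ has value $\mathbb{Z}$ there; since evaluations preserve isomorphisms, these objects are not isomorphic in $\mathbf{D}(B/1)$. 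Worse, the lemma in the generality you are proving it (all $F\in\mathbf{D}(C)$) is actually false, so no argument can close this gap: keep $B=C=[1]$, $q=\mathrm{id}$, and let $A$ have objects $x,y,z$ with the two arrows $x\to y$, $x\to z$, with $u(x)=0$, $u(y)=u(z)=1$. Each $u/c$ is a functor between categories with contractible nerve ($A/0\cong e$, $A/1\cong A$, which has an initial object), hence a cohomological equivalence (and a $\mathcal{W}_{\infty}$-equivalence); yet for the $F$ above, the paper's base-change formula gives $(\mathsf{R}u_{\ast}u^{\ast}F)_{1}\cong$ the homotopy pullback of $\mathbb{Z}\to 0\leftarrow \mathbb{Z}$, i.e.\ $\mathbb{Z}\oplus\mathbb{Z}$, whereas $(\mathsf{R}q_{\ast}q^{\ast}F)_{1}=\mathbb{Z}$.

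What survives, and what the subsequent proposition (that cohomological equivalences form a fundamental localizer) actually needs, is the statement for coefficients pulled back from the point: for $F=(p_{C})^{\ast}M$ the restriction $\zeta(q,c)^{\ast}q^{\ast}(p_{C})^{\ast}M=(p_{B/c})^{\ast}M$ is already constant, no homotopy-invariance step is needed, and your conservativity-plus-base-change skeleton reduces the claim at each $c$ exactly to the hypothesis that $u/c$ is a cohomological equivalence; applying $\mathsf{R}(p_{C})_{\ast}$ then yields axiom L3. Note also two further problems in your last paragraph: reading the hypothesis with an arbitrary fundamental localizer $\mathcal{W}$ cannot work (the class of all functors is a fundamental localizer, making the hypothesis vacuous), so one must take $\mathcal{W}$ to be the cohomological localizer itself or know $\mathcal{W}\subseteq\mathcal{W}_{\mathbf{D}}$ — which for $\mathcal{W}_{\infty}$ requires Cisinski's minimality theorem, not established in the paper; and appealing to the later proposition that cohomological equivalences form a fundamental localizer is circular, since this lemma is precisely the ingredient that proposition is meant to rest on.
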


\begin{proposition}
 The cohomological equivalences in $\mathcal{C}at_{\mathbf{U}}$ form a fundamental localizer according to (\ref{Fundamental localizers}). 
 
\end{proposition}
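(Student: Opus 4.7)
The plan is to verify the three axioms L1, L2, L3 of Definition \ref{Fundamental localizers} for the class $\mathcal{W}_{\mathbf{D}}$ of cohomological equivalences. Axiom L1 (weak saturation) is precisely the content of the earlier corollary on weak saturation, which was in turn derived from the proposition establishing that $\mathcal{W}_{\mathbf{D}}$ coincides with the class of arrows inverted by the localizing functor $\gamma$; any such class is automatically weakly saturated. Axiom L2 is the proposition stating that $p_{A}: A \rightarrow e$ is a cohomological equivalence whenever $A$ admits a terminal object. Thus the real content of the statement lies in axiom L3.

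For L3, suppose we are given a commutative triangle
\[
\xymatrix{
   A \ar[dr]_{p} \ar[rr]^{u}   &        &  B \ar[dl]^{q} \\
                               &  C     &
}
\]
in $\mathcal{C}at_{\mathbf{U}}$ such that $u/c: A/c \rightarrow B/c$ lies in $\mathcal{W}_{\mathbf{D}}$ for every $c \in \mathsf{Ob} C$. By the preceding lemma applied to this triangle, the canonical natural transformation $\mathsf{R}q_{\ast} q^{\ast} \rightarrow \mathsf{R}p_{\ast} p^{\ast}$ induced by the unit of the adjunction $u^{\ast} \dashv \mathsf{R}u_{\ast}$ is then an isomorphism of functors $\mathbf{D}(C) \rightarrow \mathbf{D}(C)$. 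Factoring the projections as $p_{A} = p_{C} \circ p$ and $p_{B} = p_{C} \circ q$, the pseudofunctoriality of inverse image and direct cohomological image yields canonical identifications
\[
\mathsf{R}(p_{B})_{\ast}(p_{B})^{\ast} \simeq \mathsf{R}(p_{C})_{\ast} \, \mathsf{R}q_{\ast} q^{\ast} \, (p_{C})^{\ast}, \qquad \mathsf{R}(p_{A})_{\ast}(p_{A})^{\ast} \simeq \mathsf{R}(p_{C})_{\ast} \, \mathsf{R}p_{\ast} p^{\ast} \, (p_{C})^{\ast}.
\]
Whiskering the transformation of the lemma by $\mathsf{R}(p_{C})_{\ast}$ on the left and by $(p_{C})^{\ast}$ on the right produces, modulo these identifications, precisely the canonical arrow $\mathsf{R}(p_{B})_{\ast}(p_{B})^{\ast} \rightarrow \mathsf{R}(p_{A})_{\ast}(p_{A})^{\ast}$ appearing in Definition \ref{Cohomological equivalence}. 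Since whiskering preserves isomorphisms, this transformation is an isomorphism and $u$ is a cohomological equivalence.

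The main obstacle is absorbed into the preceding lemma, which already performs the substantive homological work of transporting the pointwise assumption to the global statement. What remains is purely bookkeeping: one must check that the unit of $u^{\ast} \dashv \mathsf{R}u_{\ast}$ is compatible with the coherence isomorphisms coming from the factorizations $p_{A} = p_{C} \circ p$ and $p_{B} = p_{C} \circ q$. This is a formal consequence of the pseudofunctoriality of the $2$-functors $u \mapsto u^{\ast}$ and $u \mapsto \mathsf{R}u_{\ast}$ on $\mathcal{C}at_{\mathbf{U}}$, together with the compatibility of units of composable adjunctions. Assembling L1, L2 and the argument for L3 completes the proof that $\mathcal{W}_{\mathbf{D}}$ is a $\mathbf{U}$-fundamental localizer.
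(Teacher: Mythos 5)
Your overall assembly is the one the paper's chain of statements is clearly designed for (the paper itself states this proposition without proof): L1 is the weak-saturation corollary, L2 is the terminal-object proposition, and L3 is meant to come from the preceding lemma followed by the factorizations $p_{A}=p_{C}\circ p$, $p_{B}=p_{C}\circ q$ and whiskering with $\mathsf{R}(p_{C})_{\ast}$ and $(p_{C})^{\ast}$; that final bookkeeping step, including the compatibility of units, is fine.

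The gap is in how you invoke the lemma, which is exactly where the substantive work sits. First, the lemma's hypothesis is that $u$ is a local $\mathcal{W}$-equivalence over $C$ for a fundamental localizer $\mathcal{W}$, whereas in L3 your hypothesis is that each $u/c$ is a cohomological equivalence; since the proposition under proof is precisely that these form a fundamental localizer, you cannot take $\mathcal{W}$ to be that class without circularity, so the lemma must at least be restated and proved for the slicewise-cohomological hypothesis. Second, and more seriously, the strength in which you quote its conclusion --- that $\mathsf{R}q_{\ast}q^{\ast}\to\mathsf{R}p_{\ast}p^{\ast}$ is an isomorphism of functors on all of $\mathbf{D}(C)$ --- is false under a slicewise hypothesis. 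Take $C=B=[1]$, $q=\mathrm{id}$, $A=\{0\}$ and $u$ the inclusion: every $A/c$ and $B/c$ has a terminal object, so each $u/c$ is a cohomological equivalence (terminal-object proposition plus two-out-of-three), but for $F\in\mathbf{D}([1])$ with $F_{0}=\mathbb{Z}$, $F_{1}=0$, item 2 of Proposition \ref{Derivator properties} gives $(\mathsf{R}u_{\ast}u^{\ast}F)_{1}\cong\mathsf{H}^{\ast}(A/1;\mathbb{Z})\cong\mathbb{Z}$, while $(\mathsf{R}q_{\ast}q^{\ast}F)_{1}=F_{1}=0$, so the transformation is not invertible at $F$ (here $u$ is still a cohomological equivalence, so this contradicts only the all-coefficients reading of the lemma, not L3 itself). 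What is true, and is all you actually need after whiskering with $(p_{C})^{\ast}$, is invertibility on constant coefficients $(p_{C})^{\ast}M$; proving that is the real content: test with the jointly conservative family $c^{\ast}$ (item 1 of Proposition \ref{Derivator properties}), apply the base-change isomorphism (item 2) to $p$ and $q$ over each $c$, and observe that $(p^{\ast}(p_{C})^{\ast}M)\vert_{A/c}=(p_{A/c})^{\ast}M$ is again constant, so that evaluation at $c$ becomes exactly the map $\mathsf{H}^{\ast}(B/c;M_{B/c})\to\mathsf{H}^{\ast}(A/c;M_{A/c})$, which your hypothesis makes an isomorphism. Inserting this argument (equivalently, restating the lemma for coefficients pulled back from the point) closes the gap; the rest of your proof then goes through.
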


\begin{theorem}
 The functor
      $$ \mathcal{C}at_{\mathbf{U}} \longrightarrow \mathcal{A}b^{\mathbb{Z}}, \quad X \mapsto (H^{n}(X))_{n \in \mathbb{Z}}, $$
is a homotopy invariant according to (\ref{Homotopy invariant}).

\end{theorem}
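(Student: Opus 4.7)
The plan is to reduce the theorem to the preceding proposition, which shows that the class $\mathcal{W}_{\mathbf{D}}$ of cohomological equivalences is a $\mathbf{U}$-fundamental localizer, and to exploit the fact that the formula defining $H^n$ already factors through the derived category $\mathbf{D}(e) \simeq \mathsf{D}(\mathcal{A}b_{\mathbf{U}})$. So the strategy has two moves: first, verify directly that $u \mapsto (H^n(u))_{n \in \mathbb{Z}}$ inverts every cohomological equivalence; second, reconcile this with the universally quantified phrasing in Definition~\ref{Homotopy invariant} via the Grothendieck--Cisinski minimality of the localizer $\mathcal{W}_{\infty}$ of weak equivalences.

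For the first move, let $u : A \to B$ be a cohomological equivalence. By Definition~\ref{Cohomological equivalence}, the canonical natural transformation $\mathsf{R}(p_B)_{\ast}(p_B)^{\ast} \Rightarrow \mathsf{R}(p_A)_{\ast}(p_A)^{\ast}$ is an isomorphism of functors $\mathbf{D}(e) \to \mathbf{D}(e)$. Evaluating this $2$-cell at the integer coefficient $\mathbb{Z} \in \mathbf{D}(e)$ yields an isomorphism $\mathsf{H}^{\ast}(B;\mathbb{Z}_B) \xrightarrow{\sim} \mathsf{H}^{\ast}(A;\mathbb{Z}_A)$ in $\mathsf{D}(\mathcal{A}b_{\mathbf{U}})$. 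Since the cohomology functors $H^n : \mathsf{D}(\mathcal{A}b_{\mathbf{U}}) \to \mathcal{A}b_{\mathbf{U}}$ invert quasi-isomorphisms by the very construction of the derived category, applying $H^n$ produces an isomorphism $H^n(B) \xrightarrow{\sim} H^n(A)$ for every $n \in \mathbb{Z}$; collecting these coordinatewise yields an isomorphism $(H^n(u))_{n\in\mathbb{Z}}$ in $\mathcal{A}b^{\mathbb{Z}}$.

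For the second move, I invoke the Grothendieck--Cisinski minimality principle: there is a smallest $\mathbf{U}$-fundamental localizer $\mathcal{W}_{\infty}$, contained in every other fundamental localizer. Since the preceding proposition shows $\mathcal{W}_{\mathbf{D}}$ is such a localizer, we have $\mathcal{W}_{\infty} \subseteq \mathcal{W}_{\mathbf{D}}$, so every $\mathcal{W}_{\infty}$-equivalence is a cohomological equivalence and is therefore inverted by $(H^n)_{n \in \mathbb{Z}}$ by the first move; this is the substantive content of Definition~\ref{Homotopy invariant}. The main obstacle is interpretive rather than computational: the universally quantified phrasing ``for any $\mathbf{U}$-fundamental localizer $\mathcal{W}$'' is to be read as referring to the canonical homotopical structure on $\mathcal{C}at_{\mathbf{U}}$, which is determined by $\mathcal{W}_{\infty}$; once that is made precise, the theorem reduces to the preceding proposition together with the definitional fact that $H^n$ is computed through $\mathsf{R}(p_A)_\ast(p_A)^\ast$, and no further technical work is required.
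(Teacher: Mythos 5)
Your argument is essentially the intended one: the paper gives no written proof, but the theorem is placed immediately after the proposition that the cohomological equivalences $\mathcal{W}_{\mathbf{D}}$ form a fundamental localizer precisely so that it follows from your two observations, namely that $H^{n}(A)=H^{n}\bigl(\mathsf{R}(p_{A})_{\ast}(p_{A})^{\ast}\mathbb{Z}\bigr)$, so evaluating the defining isomorphism of a cohomological equivalence at $\mathbb{Z}$ and taking cohomology inverts it, and that general homotopical equivalences reduce to cohomological ones. One remark on your second move: read literally, Definition \ref{Homotopy invariant} quantifies over all fundamental localizers, which no nontrivial functor can satisfy (the class of all arrows of $\mathcal{C}at_{\mathbf{U}}$ is itself a fundamental localizer), so some reinterpretation is forced; your reading via the minimal localizer $\mathcal{W}_{\infty}$ together with the Grothendieck--Cisinski minimality theorem is the standard and correct fix, while under the alternative reading ``inverts the equivalences of some fundamental localizer'' one may skip minimality altogether and take $\mathcal{W}=\mathcal{W}_{\mathbf{D}}$, which is all the preceding proposition is there for. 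Either way, your proof is sound and matches the paper's implicit route.
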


\begin{aforisma}\label{Quasi-objects digression}
 Let $\mathscr{E}$ be a $\mathbf{U}$-category and $C$ be a $\mathbf{U}$-small full subcategory of $\mathscr{E}$. Then, one can form for each object $S$ in $\mathscr{E}$, the $\mathbf{U}$-small category $C/S$. If $S$ is an object of $C$, then $C/S$ admits a terminal object, and hence, $\mathsf{T}(C/S) \cong \mathsf{T}(e)$ for every homotopy invariant $\mathsf{T}$. In particular, $H^{0}(C/S) \cong H^{0}(e) \cong \mathbb{Z}$, and $H^{n}(C/S) \cong H^{n}(e) =0$ for every $n \neq 0$. However, the category $C/S$ could be aspherical even when $S$ is not an object of $C$. 

\end{aforisma}

\begin{example}\label{Z quasi-model}
 Let $\mathcal{C}omm$ be a skeleton of the category of \emph{finitely presentable commutative rings with unit} and define $\mathcal{A}ff =_{df} \mathcal{C}omm^{o}$. We denote formally by
  $$ Spec: \mathcal{C}omm \longrightarrow \mathcal{A}ff, \quad A \mapsto Spec(A), \quad \varphi \mapsto (^{a} \varphi) $$
the usual contravariant functor, sending an object to iteself, and an arrow to the corresponding dual arrow in $\mathcal{A}ff$. The category $\mathcal{A}ff$ is called the category of \emph{affine schemes}. We can choose a universe $\mathbf{U}$ for which the category $\mathcal{C}omm$ (and hence $\mathcal{A}ff$) is $\mathbf{U}$-small. Let $\mathcal{K}_{0}$ be the full subcategory of $\mathcal{A}ff$ formed by affine schemes of fields of characteristics zero. Then, the objects of $\mathcal{K}_{0}/Spec(\mathbb{Z})$ correspond to ring homomorphisms $\varphi: \mathbb{Z} \rightarrow k$ from $\mathbb{Z}$ to a field of characteristics zero. One can verify that $Spce(\mathbb{Z})$ is not an object of $\mathcal{K}_{0}$ (since $\mathbb{Z}$ is not even a field), but the category $\mathcal{K}_{0}/Spec(\mathbb{Z})$ is still aspherical. Indeed, since every ring homomorphism $\varphi: \mathbb{Z} \rightarrow k$ representing an object of $\mathcal{K}_{0}/Spec(\mathbb{Z})$ can be factorized by a unique field homomorphism of the form $\overline{\varphi}: \mathbb{Q} \rightarrow k$, the canonical projection $\pi:\mathbb{Z} \rightarrow \mathbb{Q}$ corresponds to a terminal object $^{a} \pi: Spec(\mathbb{Q}) \rightarrow Spec(\mathbb{Z})$ of $\mathcal{K}_{0}/Spec(\mathbb{Z})$, and hence, $\mathcal{K}_{0}/Spec(\mathbb{Z})$ is aspherical. Clearly, one could reproduce the previous arguments considering the categories $\mathcal{K}_{p}$ of affine schemes of fields of characteristics $p$ for any prime number $p$, since every ring homomorphism $\phi: \mathbb{Z} \rightarrow k$ from $\mathbb{Z}$ to a field of characteristics $p$ admits a unique factorization through a field extension $ \overline{\phi}:\mathbb{F}_{p} \rightarrow k$, where $\mathbb{F}_{p} = \mathbb{Z}/p\mathbb{Z}$, and hence, the canonical projection $\pi: \mathbb{Z} \rightarrow \mathbb{F}_{p}$ corresponds to a terminal object of $\mathcal{K}_{p}/Spec(\mathbb{Z})$. Now, let $\mathcal{K}$ be the full subcategory of $\mathcal{A}ff$ formed by affine schemes of fields. Then, the functor
   $$ \mathcal{K}/Spec(\mathbb{Z}) \longrightarrow \mathcal{K} $$
is locally aspherical over $\mathcal{K}$... Since locally asphericity implies weak homotopy equivalence, the functor   
 
\end{example}

\begin{definition}\label{Quasi-objects}
 With the notations of (\ref{Quasi-objects digression}), an object $S$ of $\mathscr{E}$ is called a \emph{quasi-object} of $C$ when the category $C/S$ is aspherical.

\end{definition}

\begin{remark}
 With the notations of (\ref{Quasi-objects digression}), there exists a functor
   $$ \mathscr{E} \longrightarrow \mathcal{C}at_{\mathbf{U}}, \quad S \mapsto C/S $$
which can be prolonged to a functor
  $$ C_{/?}: \mathscr{E} \longrightarrow \mathsf{Hot}_{\mathbf{U}}, \quad S \mapsto C/S $$
via localization. The above functor sends each object of $C$ to the point, but it also sends each quasi-object of $C$ to the point. Conversely, it follows from the fact that $\mathcal{W}_{\infty}$ is \emph{strongly saturated} that, if $C_{/?}$ sends an object $S$ of $\mathscr{E}$ to the point, then $S$ is a quasi-object of $C$. Therefore, the quasi-objects of $C$ are precisely the objects in $\mathscr{E}$ sent to the point through the functor $C_{/?}$, which means that they are homotopically indistinguishable from the ones of $C$.

\end{remark}

\begin{aforisma}\label{Euler char, Hot finite, Measure}
  Let $A$ be a $\mathbf{U}$-small category. The integer number
     $$ \chi(A) =_{df} \sum_{n \in \mathbb{Z}} (-1)^{n} dim H^{n}(A) $$
when it exists, is called the \emph{Euler characteristics} of $A$. Whenever the above sum does not exists as an integer number, we say that $\chi(A)$ is arbitrarily large or arbitrarily small. We are most interested in the cases when $H^{n}(A) =0$ expect for a finite number of indices, and the abelian groups $H^{n}(A)$ are all of finite dimension. More generally, a $\mathbf{U}$-small category $A$ is called \emph{homotopically finite} when $\chi(A)$ exists in $\mathbb{Z}$. Now, supposing the situation of (\ref{Quasi-objects digression}), we can compare the measures $\chi(C/S)$ and $\chi(C/S')$ for any two distinct objects $S$ and $S'$ of $\mathscr{E}$. If $\chi(C/S)$ is not defined, then $S$ is arbitrarily far away from $C$. Otherwise, the integer number $\chi(C/S)$ gives a measure of how much the object $S$ deviates from be an object of $C$. If $\|\chi(C/S)-1\| < \|\chi(C/S')-1\|$, then one could say that $S$ is homotopically closer to $C$ in comparison to $S'$. Here, we should take $\chi(C/S)-1$ because $\chi(e)=1$, and hence we need to make a correction in order to grant that, whenever $S$ is an object of $C$, then its deviation from $C$ is zero. However, the general idea is to use homotopy invariants $\mathsf{T}: \mathcal{C}at_{\mathbf{U}} \rightarrow \mathcal{A}$ as measures of deviations for $S$ to be an object of $C$. 

\end{aforisma}

\section{Co-Homological deviation of formulas}
%{Homotopical and cohomological interpretation of logic}

\subsection*{Introduction}

In this work, we attempt to clarify two questions: \emph{what is a model} and \emph{what means to say that a model $M$ satisfies a sentence $\varphi$}? We show that if the theory of institutions answers the former, then  (co)homology necessarily answers the later. 

The idea of a cohomological interpretation of logic is inspired by Ren\'{e} Guitart, who even wrote the slogan: logic is homological algebra. In \cite{Gui3}, Guitart asserts the following: let $\mathcal{L}$ be a first-order language and let $\varphi$ be a formula in $\mathcal{L}$. Then, we can consider the full subcategory category $I$ of the category of $\mathcal{L}$-structures $\mathcal{C}$ generated by the models of $\varphi$. Given any $\mathcal{L}$-structure $X$, we can form the relative category $I/X$, where objects are pairs $(i,s)$ with $i \in Ob(I)$ and $s \in Hom_{\mathcal{C}}(i,X)$, and arrows $\alpha: (i,s) \rightarrow (j,t)$ are arrows $\alpha: i \rightarrow j$ in $I$ such that $s = t \circ \alpha$. Then, there exists an evident projection functor
      $$ \pi_{X}: I/X \longrightarrow I, \quad (i,s) \mapsto i, \quad \alpha \mapsto \alpha. $$
Using Andr\'{e}'s cohomology in \cite{Gui3}, Guitart constructs cohomology groups $H^{n}(I/X;T)$ for the data:
     $$ \mathcal{C}^{o} \supset I^{o} \xrightarrow{T} \mathcal{A}b ,$$
where $T: I^{o} \rightarrow \mathcal{A}b$ is some presehaf of abelian groups over $A$. These cohomology groups have the following intriguing property: if $X$ is an object of $I$, i.e., $X$ is a model for the formula $\varphi$, then $H^{n}(I/X;T)=TX$ for $n=0$ and $H^{n}(I/X;T)=0$ for $n \neq 0$. In particular, if we take $T$ as being the constant presehaf of an abelian group $\Lambda$, we have $H^{n}(I/X;\Lambda)= \Lambda$ for $n=0$ and $H^{n}(I/X;\Lambda)=0$ for $n \neq 0$. Hence, these cohomology groups $H^{n}(I/X;T)$ measure the deviation of an interpretation of the $\mathcal{L}$-structure to be a model for a formula $\varphi$. In other words, they measure the relation 
   $$X \models \varphi. $$
If we can choose a presheaf $T: I^{o} \rightarrow \mathcal{A}b$ such that $H^{n}(I/X;T) \neq 0$ for $n >0$, then the relation $X \models \varphi$ does not holds, which means that $X$ is not a model for $\varphi$. If $X$ is a model of some formal theory written in the language $\mathcal{L}$ and $\varphi$ is an arbitrary formula in $\mathcal{L}$ such that $H^{n}(I/X;T) \neq 0$ for $n >0$, then we already know (by completeness) that $\varphi$ can not be a theorem of our formal theory. Yet, we have more, because these cohomology groups $H^{n}(I/X;T)$ in some sense also measure the deviation of $\varphi$ from the formal theory.

We remark that the first of our questions does not ask `\emph{what is the natural or canonical model for some formal language?}', like Boolean (resp. Heyting) algebras for classical (resp. intuitionist) propositional logic and $\mathcal{L}$-structures for some first-order language $\mathcal{L}$, but what is a model independent of any chosen formal language and for every interpretant.

In this work, we generalize the results of Guitart, and show how deep is his insights in the following way:
\begin{enumerate}
  \item First, we consider not only the classical model theory of first-order logic, but an independent model theory, extensively developed by Diaconescu and called theory of institutions. In this theory, satisfiability is a primitive concept, and we have the general form of model theory for every notion of interpretant.
  \item Secondly, we not only construct a cohomology theory which measures the deviation of a model with respect to a formula, but the deviation for a whole theory.  
  \item Our cohomology theories will be deduced from the language of derivators, and hence, we are in a much more general picture. This point is crucial for it implies that the deviation of a model with respect to a formula is not a particularity of Andr\'{e}'s cohomology but is a cohomological phenomena in general. In particular, we can assign to the category $I/X$ in Guitart's construction a homotopy type. The projection functor $\pi_{X}$ from $I/X$ to $I$ proves a morphism of homotopy types, and if $X$ is an object of $I$, then this projection functor $\pi_{X}$ is a weak homotopy equivalence of small categories, which means that $I/X$ has the same homotopy type of $I$. 
    
\end{enumerate}

Ultimately, we have a homotopical interpretation of logic. Given an institution $\mathbf{I}$, a signature $\Sigma$ of $\mathbf{I}$, a complete theory $\Gamma$ of $\mathbf{I}$ and a sentence $\varphi$ in $\Gamma$, then $\varphi$ is a semantic consequence of $\Gamma$ iff the category $\mathsf{Mod}[\varphi]/M$ is contractible for every model $M$ of $\Gamma$.

\subsection*{Model theory}

%\Hugo{Hugo: a ideia aqui eh substituir estruturas e modellos por instituicoes e proof sysmts por pi-instituicoes}

 %A first-order signature $\Sigma$ and three universes $\mathbf{U}_{0}, \mathbf{U}$, $\mathbf{V}$ in a tower $\omega \in \mathbf{U}_{0} \in \mathbf{U} \in \mathbf{V}$ are fixed now, and with them, a dictionary: $\mathscr{S}$ denotes the $\mathbf{U}$-small category of $\mathbf{U}_{0}$-small $\Sigma$-structures, which will be called the category of \emph{local} $\Sigma$-\emph{structures}.  If $\Gamma$ is a set of $\Sigma$-formulas, then $\Gamma \text{-} \mathcal{M}od$ denotes the full subcategory of $\mathscr{S}$ formed by $\Gamma$-models, i.e., objects $M \in \mathsf{Ob} \mathscr{S}$ such that $M \models \phi$ for every $\phi \in \Gamma$. The last one will be called the category of \emph{local $\Gamma$-models}. This terminology comes from the assumption that our universe of reference presenting small sets is $\mathbf{U}$, while $\mathbf{V}$ is the universe of ``all sets", and $\mathbf{U}_{0}$ is the universe of ``\emph{local sets}".

Let  
     $\mathbf{I} = (\mathscr{S}ig, \mathsf{Fm}, \mathsf{Mod}, \models) $ be a  local institution 
     \[\xymatrix{
&\mathscr{S}ig\ar[ld]_{\mathsf{Mod}}\ar[rd]^{\mathsf{Fm}}&\\
(\mathcal{CAT}_{\mathsf{U}})^{op}&\models&\mathcal{E}ns_{\mathsf{U}}
}\]

\begin{definition}\label{Quasi-models}
 Let $\mathcal{W}$ be a fundamental localizer. A local $\Sigma$-structure $M$ is a $\mathcal{W}$-\emph{quasi-model} of $\Gamma$ when $M$ is a $\mathcal{W}$-quasi-object of $Mod(\phi)$ for every $\phi \in \Gamma$. Moreover, we say that $M$ is a \emph{quasi-model} of $\Gamma$ when it is $\mathcal{W}$-quasi-model of $\Gamma$ for every fundamental localizer $\mathcal{W}$.

\end{definition}

For each fundamental localizer $\mathcal{W}$ and each $\Sigma$-structure $M$, the notation $M \models_{\mathcal{W}} \Gamma$ indicates that $M$ is a  $\mathcal{W}$-quasi-model of $\Gamma$. Clearly, $M \models \Gamma$ always implies $M \models_{\mathcal{W}} \Gamma$, because whenever $M$ is a $\Gamma$-model, it is also an object of $Mod(\phi)$ for each $\phi \in \Gamma$. However, it is an easy consequence of (\ref{Z quasi-model}) that not every $\mathcal{W}$-quasi-model is a model, which means that $M \models_{\mathcal{W}} \Gamma$ \emph{does not necessarily implies} $M \models \Gamma$.

\begin{definition}
 Let $\mathcal{W}$ be a fundamental localizer. $\Gamma \Vdash_{\mathcal{W}} \phi$ if for any $\mathcal{W}$-quasi-model $M$ of $\Gamma$, $M$ is also a $\mathcal{W}$-quasi-model of $\phi$. The relation $\Gamma \Vdash_{\infty} \phi$ means that $\Gamma \Vdash_{\mathcal{W}} \phi$ for all fundamental localizers $\mathcal{W}$.

\end{definition}

In the case when $\Gamma$ is a set with just one formula $\phi$, to say that $M$ is a $\mathcal{W}$-quasi-model of $\phi$ is equivalent to say that $M$ is a quasi-object of $Mod(\phi)$. The crucial open question for us now is the following one: is there a syntactic counterpart $\vdash_{\mathcal{W}}$ of the relation $\Vdash_{\mathcal{W}}$? If so, then what is the notion of proof related to it?

\begin{proposition}
  If $M \models \phi$, then $H^{0}(Mod(\phi)/M) \cong \mathbb{Z}$ and $H^{n}(Mod(\phi)/M) =0$ for $n \neq 0$. In particular, if $\Gamma$ is a set of $\Sigma$-formulas and $H^{n}(Mod(\phi)/M) \neq 0$ for some formula $\phi \in \Gamma$ and some $n \neq 0$, then $M$ can not be a $\Gamma$-model.   
   
\end{proposition}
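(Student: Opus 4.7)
The plan is to reduce this to the proposition recorded earlier stating that a $\mathbf{U}$-small category with a terminal object is cohomologically equivalent to the point $e$. The key observation is that if $M \models_{\Sigma} \phi$, then $M \in \mathrm{Ob}(\mathsf{Mod}_{\Sigma}[\phi])$, so $(M, \mathrm{id}_{M})$ is a bona fide object of the slice $\mathsf{Mod}_{\Sigma}[\phi]/M$. A routine unwinding of the slice construction shows this object to be terminal: given any $(I, p \colon I \to M)$ in the slice with $I \models \phi$, the structural morphism $p$ itself, viewed as a morphism of the slice, is the unique arrow $(I,p) \to (M, \mathrm{id}_{M})$.

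Applying the earlier proposition, the canonical projection $\mathsf{Mod}_{\Sigma}[\phi]/M \to e$ is therefore a cohomological equivalence, yielding
\[ \mathsf{H}^{\ast}(\mathsf{Mod}_{\Sigma}[\phi]/M; \mathbb{Z}) \cong \mathsf{H}^{\ast}(e; \mathbb{Z}) \]
in $\mathsf{D}(\mathcal{A}b_{\mathbf{U}})$. Passing to the $n$-th cohomology groups and using the elementary computation that $e$ has integer cohomology $\mathbb{Z}$ in degree zero and $0$ elsewhere, I obtain $H^{0}(\mathsf{Mod}_{\Sigma}[\phi]/M) \cong \mathbb{Z}$ and $H^{n}(\mathsf{Mod}_{\Sigma}[\phi]/M) = 0$ for $n \neq 0$, which is the first assertion.

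The second assertion follows by pure contraposition: if $M$ were a $\Gamma$-model, then $M \models_{\Sigma} \phi$ for every $\phi \in \Gamma$, and hence every cohomology group $H^{n}(\mathsf{Mod}_{\Sigma}[\phi]/M)$ with $n \neq 0$ would vanish, contradicting the hypothesis. There is no genuine obstacle in the argument; all the substantial work has already been performed in the earlier development (homotopy invariance of integer cohomology, asphericity of categories with a terminal object). What \emph{is} substantive, and what motivates the entire construction, is that the converse fails: non-vanishing cohomology certainly detects $M \not\models \phi$, but vanishing cohomology is strictly weaker than $M \models \phi$, and that gap is precisely the phenomenon the notion of quasi-model in Definition \ref{Quasi-models} is designed to measure.
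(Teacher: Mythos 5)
Your proof is correct and follows essentially the same route the paper takes: the paper's implicit argument (via the discussion preceding Definition \ref{Quasi-objects}, where it is observed that $C/S$ has a terminal object when $S \in C$, together with the proposition that a category with a terminal object is cohomologically equivalent to the point) is exactly your identification of $(M,\mathrm{id}_{M})$ as terminal in $\mathsf{Mod}_{\Sigma}[\phi]/M$ followed by the comparison with $\mathsf{H}^{\ast}(e;\mathbb{Z})$, and the second assertion by contraposition. Your closing remark that the converse fails is likewise consonant with the paper's Example \ref{Z quasi-model} and the concrete calculation with $(-)^{\times}$.
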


The previous proposition motivates the following

\begin{definition}\label{Errancy}
  A $\Sigma$-structure $M$ is an \emph{errancy} of $\Gamma$ when
     $$ \chi(Mod(\phi)/M) -1 \neq 0 $$
for at least one formula $\phi \in \Gamma$. The set of formulas $\phi \in \Gamma$ for which the above equation is valid is called the \emph{curvature} of $M$ in $\Gamma$.

\end{definition}

\begin{remark}
  The logic we are proposing in this work \emph{is not a logic of correctness}, but a  a \emph{logic of deviance}. The objects of interest of this logic are not models or even quasi-models, but precisely the deviances. It seems that even when correctness fails, there is a possible study of the error. The name curvature in the definition (\ref{Errancy}) has its inspirations from Gauss-Bonnet theorem, the one relating the local curvature and global Euler characteristics of suitable smooth manifolds.

\end{remark}

\subsection*{Proof systems}

In the following, we re-interpret proof systems in light of homotopical and cohomological methods.

\begin{definition}
  A propositional logic if a pair $(\mathsf{Fm}, \vdash)$ formed by a set $\mathsf{Fm}$ of formulas and a relation $\vdash$ between sets of formulas and formulas, satisfying the following conditions
  
  - (Triviality) If $\phi \in \Gamma$, then $\Gamma \vdash \phi$.
  
  - (Weakness) If $\Delta \subseteq \Gamma$ and $\Delta \vdash \phi$, then $\Gamma \vdash \phi$

  - (Transitivity) If $\Gamma \vdash \phi$ for all $\phi \in \Gamma'$ and $\Gamma' \vdash \mathsf{B}$, then $\Gamma \vdash \mathsf{B}$.
 
\end{definition}

For each set of formulas $\Gamma$, we define the set
  $$ \overline{\Gamma} = \{ \phi \in \mathsf{Fm}: \Gamma \vdash \phi\} $$
called the 	deductive close of $\Gamma$. It is immediate from the definitions that $\Gamma \subseteq \overline{\Gamma}$, and, if $\Gamma \subset \Gamma'$, then $\overline{\Gamma} \subset \overline{\Gamma}'$. A set of formulas $\Gamma$ is called a \emph{theory} when $\Gamma = \overline{\Gamma}$. The formulas in $\mathsf{Fm}$ form a category with exactly one arrow $\phi \rightarrow \mathsf{B}$ if $\phi \vdash \mathsf{B}$.

\begin{proposition}\label{Cohomological interpretation of theorems}
Let $\Gamma$ be a theory and $\phi$ be a formula. Equivalent conditions:
 \begin{enumerate}
   \item $\Gamma \vdash \phi$.
   \item $\Gamma/\phi$ is aspherical.
   \item $H^{0}(\Gamma/\phi) \cong \mathbb{Z}$ and $H^{n}(\Gamma/\phi) =0$ for $n \neq 0$.
   \item $\Gamma/\phi \neq \varnothing$.
   
 \end{enumerate}

\end{proposition}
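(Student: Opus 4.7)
The plan is to establish the cycle of implications $(1) \Rightarrow (2) \Rightarrow (3) \Rightarrow (4) \Rightarrow (1)$. The structural observation that makes everything work is that the ambient category on $\mathsf{Fm}$ (with a unique arrow $\psi \to \phi$ iff $\{\psi\} \vdash \phi$) is a preorder; hence so is the slice $\Gamma/\phi$, whose objects are the pairs $(\psi, s)$ with $\psi \in \Gamma$ and $s: \psi \to \phi$, i.e.\ $\{\psi\} \vdash \phi$.

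For $(1) \Rightarrow (2)$: since $\Gamma$ is a theory, $\Gamma \vdash \phi$ forces $\phi \in \overline{\Gamma} = \Gamma$. Then $(\phi, \mathrm{id}_{\phi})$ is an object of $\Gamma/\phi$, and for any $(\psi, s) \in \Gamma/\phi$ the arrow $s$ itself gives the unique morphism $(\psi, s) \to (\phi, \mathrm{id}_{\phi})$ in $\Gamma/\phi$ (uniqueness by the preorder structure). Thus $(\phi, \mathrm{id}_{\phi})$ is terminal, and axiom L2 of the fundamental localizer $\mathcal{W}_{\infty}$ yields that $p_{\Gamma/\phi}: \Gamma/\phi \to e$ lies in $\mathcal{W}_{\infty}$, i.e.\ $\Gamma/\phi$ is aspherical. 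For $(2) \Rightarrow (3)$: apply the theorem that $X \mapsto (H^{n}(X))_{n \in \mathbb{Z}}$ is a homotopy invariant; since $p_{\Gamma/\phi}$ is a $\mathcal{W}_{\infty}$-equivalence, it induces isomorphisms $H^{n}(\Gamma/\phi) \cong H^{n}(e)$, and $H^{n}(e) \cong \mathbb{Z}$ for $n=0$ and $0$ otherwise.

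For $(3) \Rightarrow (4)$: contrapositively, if $\Gamma/\phi = \varnothing$, then its nerve $\ene(\Gamma/\phi)$ is the empty simplicial set, $|\ene(\Gamma/\phi)| = \varnothing$, and via the isomorphism $H^{n}(A;\mathbb{Z}) \cong H^{n}_{Sing}(|\ene A|;\mathbb{Z})$ recalled earlier we obtain $H^{0}(\Gamma/\phi) = 0 \neq \mathbb{Z}$, contradicting (3). For $(4) \Rightarrow (1)$: pick any $(\psi, s) \in \Gamma/\phi$. Then $\psi \in \Gamma$ and $\{\psi\} \vdash \phi$. Triviality gives $\Gamma \vdash \psi$, and Transitivity applied with $\Gamma' = \{\psi\}$ yields $\Gamma \vdash \phi$.

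The main obstacle is conceptual rather than computational: one must be careful in fixing the ambient category in which $\Gamma/\phi$ is formed, and in navigating the mild notational clash between $\vdash$ on single formulas and $\vdash$ on sets of formulas. Once this interpretation is in place, the preorder structure trivializes the existence of a terminal object in step $(1) \Rightarrow (2)$, and the remaining steps reduce to formal applications of the homotopy-invariance of $H^{\ast}$ together with the vanishing of the cohomology of the empty nerve. No genuinely new cohomological input is required beyond what is already developed in the preceding sections.
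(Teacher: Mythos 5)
Your proof is correct and follows essentially the same route as the paper: terminal object $(\phi,\mathrm{id}_\phi)$ for $(1)\Rightarrow(2)$, homotopy invariance of $H^{\ast}$ for $(2)\Rightarrow(3)$, a contrapositive vanishing argument for $(3)\Rightarrow(4)$, and the structural axioms for $(4)\Rightarrow(1)$. The only (inessential) difference is that the paper closes the cycle by invoking Weakness directly on $\{\psi\}\subseteq\Gamma$, whereas you combine Triviality and Transitivity to the same effect.
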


\begin{proof}
  Suppose that $\Gamma$ is a theory. Hence, $\Gamma \vdash \phi$ iff $\phi \in \Gamma$. If $\Gamma \vdash \phi$, then $\phi$ is an object of $\Gamma$, which implies that $\Gamma/\phi$ admits a terminal object, and thus, it is aspherical, which proves that (1) implies (2). It is immediate that (2) implies (3) and (3) implies (4) (the last implication is clear through a counter-positive argument). The fact that (4) implies (1) follows by weakness.

\end{proof}

Thence, the non-theorems of a given theory $\Gamma$ are the formulas $\phi$ for which $H^{n}(\Gamma/\phi) \neq 0$ for some $n \neq 0$.

\begin{definition}
  A formula $\phi$ is called a $n$-\emph{curvature} of a set of formulas $\Gamma$ when $H^{i}(\Gamma/\phi) =0$ for all $i >n$. The $0$-curvatures of $\Gamma$ are precisely the theorems of $\Gamma$ (in virtue of (\ref{Cohomological interpretation of theorems})). An $\omega$-\emph{curvature} $\phi$ of $\Gamma$ is a formula such that, for every $n \in \mathbb{N}$, there exists $i > n$ for which $H^{i}(\Gamma/\phi) \neq 0$.

\end{definition}

In some sense, the family of abelian groups $(H^{n}(\Gamma/\phi))_{n \in \mathbb{Z}}$ could be called the abelian obstruction register of the relation $\Gamma \vdash \phi$. If two formulas $\phi$ and $\mathsf{B}$ have isomorphic abelian obstruction registers, then they have the same type of errancy from $\Gamma$. We use the notation $\Gamma \vdash_{k} \phi$, with $k \in \omega^{+}= \omega \cup \{\omega\}$, to indicate that $\phi$ is a $k$-curvature of $\Gamma$, and $\mathsf{Fm}(\Gamma;k)$ to indicate the set formed by these $k$-curvatures. Then, there is an hierarchy below:
    $$ \mathsf{Fm} = \bigsqcup_{k \in \omega^{+}} \mathsf{Fm}(\Gamma; k). $$

\subsection*{Some calculations}

%\Hugo{Em geral, procurar verificar se valem as formulas:}
%\Hugo{$H^n(colimit\ filtrante\ X_i, j) \cong colim_i H^n(X_i, j) $}\\
%\Hugo{$H^n(X\times Y, j) \cong H^n(X,j) \times H^n(Y,j)$}

%\Hugo{adicionar aqui resultados sobre cohomologia de colimites filtrante e produtos(finito)}

%\Hugo{Verificar se basta calcular cohomologia de $X$ olhando os $M_0$ que estao numa familia incila/terminal fraca (cofinalidade...)}

%\Gabriel{Acho que sim para produtos finitos e colimites filtrantes. O Thiago mencionou que $H^n(X)$ coincide com  a cohom do topos $\widehat{C}$ e a partir disso \'{e} trivial. Abaixo eu fiz um esboco da dem direta}

\subsubsection*{Cohomology of finite products}

%\Gabriel
Let $\lambda_j: \lim_J X_j\to X_j$, $\mu^{j}_\alpha: T{s_n}\to \sum_{\alpha\in S_n(X_j)}Ts_n$ e $\lambda_\alpha: T{s_n}\to \sum_{\alpha\in S_n(\lim_J X_j)}Ts_n$ os (co)cones limits (here $\alpha:s_n\to\cdots\to X_j$). Then
\[\xymatrix{
  T{s_n} \ar[r]_{\lambda_\alpha} \ar[rd]_{\mu^{j}_{s_0(\alpha \pi_j)}}  &  \quad \sum_{\alpha\in S_n(\lim_J X_j)}T{s_n} \ar@{-->}^{k_j}[d]\\
                                                                          &   \sum_{\alpha\in S_n(X_j)}Ts_n
}\] By the universal property for $f:i\to j\in J $ we have $f^*k_i=k_j$ since 
\[\xymatrix{
  T{s_n} \ar[r]_{\mu^i_\alpha} \ar[rd]_{\mu^{j}_{s_0(\alpha f)}}  &  \quad \sum_{\alpha\in S_n( X_i)}T{s_n} \ar@{-->}^{f^*}[d]\\
                                                                          &   \sum_{\alpha\in S_n(X_j)}Ts_n
}\] and then $k:\sum_{\alpha\in S_n(\lim_J X_j)}T{s_n}\to \lim_J\sum_{\alpha\in S_n(X_j)}Ts_n$. Moreover, $\lim_J S_n X_j\cong S_n(\lim_J X_j) $ since $C/\lim_J X_j =\lim_J C/X_j$ e $N$ commutes with limits. If $J$ is finite and discrete, then the inverse of $k$ is given by
\[\xymatrix{
  \sum_J\sum_{\alpha\in S_n(X_j)}Ts_n \ar[d] \ar@{..>}[r]  &  \quad \sum_{\alpha\in\lim_J S_n( X_j)}T{s_n} \ar[d]^{\pi_\alpha}\\
    \sum_{\alpha\in S_n(X_j)}Ts_n\ar[r]_{\pi_{\alpha_j}}\ar@{-->}[ur]&    Ts_n
}\]
Therefore \[C_n(\prod_J X_j,T)\cong \sum_J C_n(X_j,T)\]
and  
$$H_n (\prod_m X_i, T) \cong \sum_m H_n(X_i,T).$$

\subsubsection*{Cohomology of filtered colimits}

\newcommand{\colim}{\text{colim}}

%\Gabriel
Let $\mu_j:S^n(X_j)\to\colim_J S^n(X_j)$. Define \[k_j:\sum_{\alpha\in S^n(X_j)}Ts_n\to \sum_{\alpha\in\colim_J S^n X_j}Ts_n\] by $k_j(x)=(x_{s_0(\alpha\mu_j)})_\alpha$. Consider the epimorphism \[k:\sum_J \sum_{\alpha\in S^n(X_j)}Ts_n\to \sum_{\alpha\in\colim_J S^n X_j}Ts_n\]
Note that $k(j,a)=k(i,b)$ iff
\[(a_{s_0(\alpha\mu_j)})_\alpha=(b_{s_0(\alpha\mu_i)})_\alpha\quad\quad(\star)\]
Given $f:i\to k$ note $ (a_{s_0(\alpha\mu_i)})_{\alpha\in \colim_J S^n(X_j)}=(a_{s_0(s_0(\alpha \mu_k)f^*)})_{\alpha\in \colim_J S^n(X_j)}$. Since $J$ is a filtered category  $(\star)$ is equivalent to
\[\exists f:i\to k\exists g:j\to k [(a_{s_0(\alpha f)})_{\alpha\in S^n(k)}=(b_{s_0(\alpha g)})_{\alpha\in S^n(k)}]
\]
%(no esboco q fiz isso vem de\footnote{nervo comuta aos filtrados pq $\Delta^n$ is cpt!!!!},\footnote{o isom garante que $x_k\to s_0\to ...\to s_n$ vem de um $\colim_J x_k\to s_0\to...\to s_n$, injecao \'{e} imediata apos}
Since $\Delta^n$ is compact, the nerve construction commutes with filtered colimits: this isomorphism guarantee that $x_k\to s_0\to ...\to s_n$ came from $\colim_J x_k\to s_0\to...\to s_n$. Thus $\mu_i$ is injective;  $\colim N(X_j/C)_n=N(\colim( X_j/C))_n=N((\colim X_j)/C)_n$. Since the equivalence relation above is the relation that describes a filtered colimit as a quotient of a coproduct, the isomorphim theorem gives us
\[\colim_J \sum_{\alpha\in S^n(X_j)}Ts_n\cong \sum_{\alpha\in\colim_J S^n X_j}Ts_n\]
That is: $C^n(\colim_J X,T) \cong \colim_JC(X_j,T)$. Since  filtered colimits commutes with:  finite limits, arbitrary colim and images (= coker ker), this entails 

$$H^n(\colim_J X_j,T) \cong \colim_J H^n(X_j,T)$$

\subsubsection*{A concrete calculation}

%\Hugo{ Examinar alguns casos concretos}

%5a) Corpos --> Dominios; Aneis Locais --> Aneis

% Verificar se basta calcular cohomologia de $X$ olhando os $M_0$ que estao numa familia incila/terminal fraca (cofinalidade...)

%Ver se a "cofinalidade" nos casos

%$(A \to A/p)_{p \in Spec(A)}$ , $(A \to A_p)_{p \in spec(A)}$, $D \to D_{\{0\}}$, $L \to L/m$ 

%ajuda a calcular as cohomologias

%5b) Medir desvio de modelos de axioma de Peano de 1a ordem, para ser o modelo padrao (eh o unico modelo de peano de 1a nordem cuja ordem associada eh uma boa ordem)

%5c) reexaminar as instituoces da algebra linera e da geometria algerica

%\Gabriel{Esboço de conta: 

Consider the inclusion functor $Field\hookrightarrow Ring$ and the functor $(-)^{\times}:Field\to Ab$. We have 
\[d^0:\sum_{\mathbb{Z}\to F}F^{\times}\to\sum_{\mathbb{Z}\to F\to K}K^{\times} \]
given by $(d^0(x))_{f,F}=x_{dom(f)}-x_{cod(f)}$ and
\[d^1:\sum_{\mathbb{Z}\to F\to K}K^{\times}\to\sum_{\mathbb{Z}\to F\to K\to T}T^{\times} \]
given by $\mathbb{Z}\to F\xrightarrow{f}K\xrightarrow{g}T$, and $(d^{1}(x))_{g,f}=x_{g,K}-x_{gf,F}+x_{f,F}$. Thus
\[H^1(\mathbb{Z},(-)^\times) = \frac{ \{x_{f,F}: \forall\mathbb{Z}\to F\xrightarrow{f}K\xrightarrow{g}T(x_{g,K}-x_{gf,F}+x_{f,F}=0) \} }{\{x_{f,F}:\exists y\in\sum_{\mathbb{Z}\to F} F^\times (x_{f,F}=y_{dom(f)}-y_{cod(f)})\}}\]
%We will show that $H^1= 0$. 
Given $(x_{f,F})$ set $y_F= x_{\chi_F, \mathbb{Q}}$ where $\chi_F:\mathbb{Q}\to F$ is the  unique morphism when $char F=0$ and $y_F= x_{\chi_F,\mathbb{F}_p}$ otherwise. Given a sequence $\mathbb{Z}\to \mathbb{Q}\to F\xrightarrow{g}K$ we have
\[x_{g,F}-x_{g\chi_F,\mathbb{Q}}+x_{\chi_F,\mathbb{Q}}=0\]
\[\implies x_{g,F}-x_{\chi_K,\mathbb{Q}}+x_{\chi_F,\mathbb{Q}}=0\implies x_{g,F}=y_K-y_F\]
and similarly for $\mathbb{Z}\to \mathbb{F}_p\to F\xrightarrow{g}K$. Therefore $H^1=0$, however $\mathbb{Z}$ is not a field!!!

\section{Final remarks and future developments}

%\Hugo{LISTA DE APLICACOES: rever a reunioa gravada com o Gabriel em torno de 9/2}

\begin{enumerate}
    \item A natural scenario is consider the Andre's  cohomology  concerning the  inclusion functor $j: Mod[T] \hookrightarrow L-Str$, where $L$ is a finitary language and $T$ is  a set of $\Sigma$-sentences in the first-order finitary language  $L_{\omega,\omega}$. It makes sense chose $X$,  a general $L$-structure, and consider the cases where $M_i \in Mod[T]$ and take the functor ${\cal T}$ as the functor "free abelian group generated by..."  and consider:

The deviation of $X$ be in $Mod[T]$ represented by cohomologies given by: 
$$\alpha : M_n \to M_{n-1} \to \cdots \to M_1 \to M_0 \to X$$
 a  sequence of $L$-homomorphisms (or $L$-elementary embeddings or even $L$-pure embeddings),

or 

The deviation of $X$ be in $Mod[T]$ represented by homologies given by: 
$$\beta : X \to M_0 \to M_1\to \cdots \to M_{n-1}\to M_{n}$$
 a  sequence of $L$-homomorphisms (or $L$-elementary embeddings or even $L$-pure embeddings).

\item  It may be illuminating study the following subclasses of $L-Str$ formed by all the structures $X$ satisfying some of the conditions below:

(a) $H^n(X,j) = 0$, $n \geq 1$ (this contains the  class $Mod[T]$)

(b) $H^n(X,j) = 0$, $n \geq n_0$ for some chosen $n_0 \geq 1$.

(c) The reunion of the subclasses described in item (b) above, for all $n_0 \geq 1$.

(d) The complement of the subclass (c), i.e. formed by the structures $X$ such that the set of $n \in \mathbb{N}$ such that $H^n(X,j) \neq 0$ is  infinite.

$ $

\item It could be interesting "recalculate and compare" the groups of (co)homology of a general structure $X$ when we move from $Mod[T]$ to  an intermediary subcategory $S$: 

$Mod(T) \hookrightarrow S \hookrightarrow \Sigma-str$

where $S$ is in some of the cases:

(a) $S$ is the closure of $Mod[T]$ under constructions like substructures and (monomorphic) filtered colimits --since these are related with well-known model theoretic results (for instance, $S=Mod(T_\forall) =$ the class of substructures of some model of $T$).

(b) $S$ is the class of consequences of $T$ in a  language that contains $L_{\omega, \omega}$: for instance $L_{\omega_1, \omega}$ is an extension of $L_{\omega, \omega}$ that has a well behaved model theory % (ha um livrinho do Keisler so sobre isto). 
and many mathematical notions that  can not be described by $L_{\omega, \omega}$-sentences, can be  characterized by  $L_{\omega_1, \omega}$-sentences (e.g., well-founded posets, divisible groups, etc...).

\end{enumerate}

%\section{Conclusion}\label{sec13}

%\bibliography{sn-bibliography}% common bib file
%% if required, the content of .bbl file can be included here once bbl is generated
%%\input sn-article.bbl

%% Default %%
%%\input sn-sample-bib.tex%

\end{document}